\documentclass[reqno,12pt]{amsart}
\usepackage{amsmath, amssymb, amsthm,amsfonts}
\usepackage[english]{babel}
\usepackage{calligra}
\usepackage{graphicx}
\usepackage{url}
\usepackage{epstopdf}
\usepackage[a4paper,bindingoffset=0.5cm,left=2cm,right=2cm,top=2.5cm,bottom=2cm,footskip=.8cm]{geometry}

\usepackage{tikz}
\usetikzlibrary{arrows, automata,positioning,calc,shapes,decorations.pathreplacing,decorations.markings,shapes.misc,petri,topaths}
\newlength\figureheight
  \newlength\figurewidth
  \setlength{\parindent}{0mm}
  \setlength{\parskip}{2mm}
\setlength\figureheight{3.5cm} \setlength\figurewidth{5cm}
\usetikzlibrary{math,shapes.geometric}

\usepackage{rotating}
\usepackage{amsbsy,enumerate}
\usepackage{graphicx}
\usepackage{ccaption}
\usepackage{comment}
\usepackage{mathrsfs}
\usepackage{diagbox}

\newcommand{\vb}{\vspace{3.2mm}}

\newcommand{\R}{\mathbb{R}}

\newcommand{\p}{\mathbb{P}}

\renewcommand{\a}{\alpha}

\newcommand{\e}{\mathbb{E}}

\newcommand{\Var}{\mathbb{V}\textnormal{\textrm{ar}}}



\newcommand{\kk}[1]{{\color{cyan} #1}}


\allowdisplaybreaks

\newtheorem{lemma}{Lemma}
\newtheorem{corollary}{Corollary}

\newtheorem{theorem}{Theorem}

\newtheorem{proposition}{Proposition}

\makeatletter
\renewcommand{\fnum@figure}[1]{\textbf{\figurename~\thefigure}. }
\renewcommand{\fnum@table}[1]{\textbf{\tablename~\thetable}. }
\makeatother


\begin{document}

\title[Bounds for expected supremum of fBm with drift]{Bounds for expected supremum of\\ fractional Brownian motion with drift}

\author{Krzysztof Bisewski, Krzysztof D\c{e}bicki, Michel Mandjes}

\begin{abstract} We provide upper and lower bounds for the mean ${\mathscr M}(H)$ of
$\sup_{t\geqslant 0} \{B_H(t) - t\}$, with $B_H(\cdot)$ a zero-mean, variance-normalized version of
fractional Brownian motion with Hurst parameter $H\in(0,1)$. We find bounds in (semi-)closed-form,
distinguishing between $H\in(0,\frac{1}{2}]$ and $H\in[\frac{1}{2},1)$,
where in the former regime a numerical procedure is presented that drastically reduces the upper bound.
For $H\in(0,\frac{1}{2}]$, the ratio between the upper and lower bound is bounded, whereas for
$H\in[\frac{1}{2},1)$ the derived upper and lower bound have a strongly similar shape.
We also derive a new upper bound for the mean of $\sup_{t\in[0,1]} B_H(t)$, $H\in(0,\tfrac{1}{2}]$, which is tight around $H=\tfrac{1}{2}$.

\vb

\noindent
{\sc Keywords.} Fractional Brownian motion $\circ$ extreme value $\circ$ bounds

\vb

\noindent
{\sc MSC Classification.} 60G22, 60G15, 68M20

\vb

\noindent
{\sc Affiliations.} {\it K.\ Bisewski} is with Universit\'e de Lausanne,
Quartier UNIL-Chamberonne,
B\^{a}timent Extranef,
1015 Lausanne, Switzerland;
his research is funded by the Swiss National Science Foundation Grant 200021-175752/1.
Part of this work was done while KB was visiting
Mathematical Institute, Wroc{\l}aw University,
pl.\ Grunwaldzki 2/4,
50-384 Wroc{\l}aw, Poland.

\noindent {\it K.\ D\c{e}bicki} is with Mathematical Institute, Wroc{\l}aw University,
pl.\ Grunwaldzki 2/4,
50-384 Wroc{\l}aw, Poland;
his research is partly funded by NCN Grant No 2018/31/B/ST1/00370 (2019-2022).

\noindent
{\it M.\ Mandjes} is with the Korteweg-de Vries Institute for Mathematics, University of Amsterdam,
Science Park 904, 1098 XH Amsterdam, the Netherlands. MM is also with E{\sc urandom},
Eindhoven University of Technology, Eindhoven, the Netherlands, and Amsterdam Business School,
Faculty of Economics and Business, University of Amsterdam, Amsterdam, the Netherlands;
his research is partly funded by the NWO Gravitation project N{\sc etworks}, grant number 024.002.003.

\end{abstract}

\maketitle

\section{Introduction}
Due to its capability of modelling a wide variety of correlation structures, fractional Brownian motion (fBm) is a frequently used Gaussian process. Indeed, whereas for classical Brownian motion the increments are independent, depending on the value of the Hurst parameter $H\in(0,1)$, fBm covers the cases of both negatively ($H<\frac{1}{2}$) and positively ($H>\frac{1}{2}$) correlated increments.
Owing to its broad applicability, fBm has become an
intensively studied object across a broad range of scientific disciplines, such as  physics \cite{REGN, SAGI}, biology \cite{CASP}, hydrology \cite{MONT}, mathematical finance \cite{BAIL,CONT}, insurance and risk \cite[Ch. VIII]{AA}, and operations research \cite{TAQQ}.

This paper considers the all-time supremum attained by an fBm with negative drift. This supremum clearly is a key quantity in the application areas mentioned --- think e.g.\ of ruin probabilities in the insurance context. Importantly, also in queueing theory such suprema are of great importance, due to the fact that the stationary workload has the same distribution as the supremum of the (time-reversed version of) the queue's net input process \cite[Thm.\ 5.1.1]{MBOOK}.
The main objective of our work is to analyze the {\it expected value} of the supremum attained by fBm with negative drift as a function of the Hurst parameter $H$.
As exact analysis has been beyond reach so far (apart from the Brownian case of $H=\frac{1}{2}$), we focus on identifying upper and lower bounds on its expected value.

Throughout this paper
$B_H(\cdot)$ denotes a zero-mean, variance-normalized version of  fBm with  $H\in(0,1)$. More specifically,
$B_H(\cdot)$ is a Gaussian process with stationary increments such that ${\mathbb E}\,B_H(t)=0$ for all $t\in{\mathbb R}$, and
\[\Var(B_H(t)-B_H(s)) = |t-s|^{2H}\]
for all $s,t\in{\mathbb R}.$
As mentioned, the primary focus of this paper is on deriving upper and lower bounds on the mean of the all-time supremum of fBm with negative drift. In other words, we wish to analyze, for some $c>0$,
\begin{equation}
{\mathscr M}(H,c): = {\mathbb E}\left[\sup_{t\geq0} \{B_H(t) - ct\}\right].
\end{equation}
Interestingly, exploiting fBm's self-similarity, for any $c>0$ we can express ${\mathscr M}(H,c)$ directly in terms of
${\mathscr M}(H):={\mathscr M}(H,1)$. This can be seen as follows. Renormalizing time yields, with $\gamma:=(2H-2)^{-1}$,
\begin{equation}\sup_{t\geq0} \{B_H(t) - ct\}= \sup_{t\geqslant  0} \{B_H({c^{2\gamma }t})-c\cdot c^{2\gamma }t\}
= \sup_{t\geqslant  0} \{B_H({c^{2\gamma }t})-c^{2H\gamma}t\}.\label{EXP}\end{equation}
As a consequence of the self-similarity of fBm, $B_H(x t)$ is distributed as $x^{H} B_H(t)$, and therefore $B_H({c^{2\gamma }t})$ is distributed as $c^{2H\gamma}B_H(t)$. Hence,  the random variable (\ref{EXP}) is, in the distributional sense, equal to
\begin{equation*} \sup_{t\geqslant  0} \{B_H({c^{2\gamma }t})-c^{2H\gamma}t\}= c^{2H\gamma}
\sup_{t\geqslant  0}\{B_H(t) - t\}.\end{equation*}
In other words, in order to analyze the behavior of ${\mathscr M}(H,c)$ for any $c>0$,
it suffices to consider its unit-drift counterpart ${\mathscr M}(H)$. Only in the Brownian case the value of this function
is known: for $H=\frac{1}{2}$, $\sup_{t\geqslant  0}\{B_{{1}/{2}}(t) - t\}$ is exponentially distributed with parameter~$2$,
so that ${\mathscr M}(\frac{1}{2})=\frac{1}{2}$.

A substantial body of literature  has focused on characterizing the distribution of the supremum of fBm,
either over a finite time interval (often assuming that the drift equals 0), or over an infinite time interval
(in which a negative drift ensures a finite supremum). In general terms, one could say that the vast majority of
the results obtained is of an asymptotic nature. For instance, in \cite{HUS,MAS,NAR}
a function $f(u)$ is found such that, as $u\to\infty$,
\begin{eqnarray}
f(u)\cdot {\mathbb P}\left(\sup_{t\geqslant  0}\{B_H(t) - t\}>u\right)\to 1;\label{as.1}
\end{eqnarray}
see for the precise statement \cite[Prop.\ 5.6.2]{MBOOK}.
We also refer to \cite{Deb99,Die04,HuP02} for extensions of (\ref{as.1}) to a broader class of
Gaussian processes with stationary increments and to
\cite{DOC} for a seminal paper on the corresponding logarithmic asymptotics.
Similar large-deviations results in another asymptotic regime,
namely a setting in which the Gaussian process is interpreted as the superposition of many i.i.d.\
Gaussian processes, can be found in \cite{DM}. Other asymptotic results relate to higher dimensional systems,
such as tandem queues; see e.g. \cite{KOS,MU}.
The logarithmic asymptotics of long busy periods  in fBm-driven queues have been identified in \cite{MMNU},
where it is noted that a similar approach could be relied upon so as characterize the speed of convergence of
fractional Brownian storage to its stationary limit \cite{MNG}.

While computing bounds pertaining to the extreme values attained by Gaussian processes is a large and mature
research area (see e.g.\ \cite{ADL,PIT,TAL}),
there is only a limited number of results that provide computable upper and lower bounds on the expected supremum.
A notable exception concerns the recent results by Borovkov et al.\ \cite{borovkov2016new,borovkov2017bounds}, presenting (non-asymptotic) bounds
on the expected supremum of a driftless fBm over a finite time interval (as functions of the
Hurst parameter $H$, that is). The same setting is considered in \cite{MAMA}, but a more pragmatic approach has been followed:  the objective is to  accurately fit a curve to estimated values of the expected supremum. In addition, we would like to stress that an intrinsic drawback of asymptotics is that, in the absence of error bounds, one does not know whether such asymptotic results provide any accurate approximations for instances in a pre-limit setting.
The above considerations motivate the objective of our work: identifying computable bounds on the expected supremum of fBm with drift. We remark that, the identification of such bounds is clearly relevant in its own right, but they in addition  play a pivotal role
when one aims at applying Borell-type inequalities \cite{ADL} so as to obtain uniform estimates for the tail distribution
of suprema.

\begin{figure}[b]
        \centering
       \includegraphics[height=0.45\linewidth,width=0.675\linewidth]{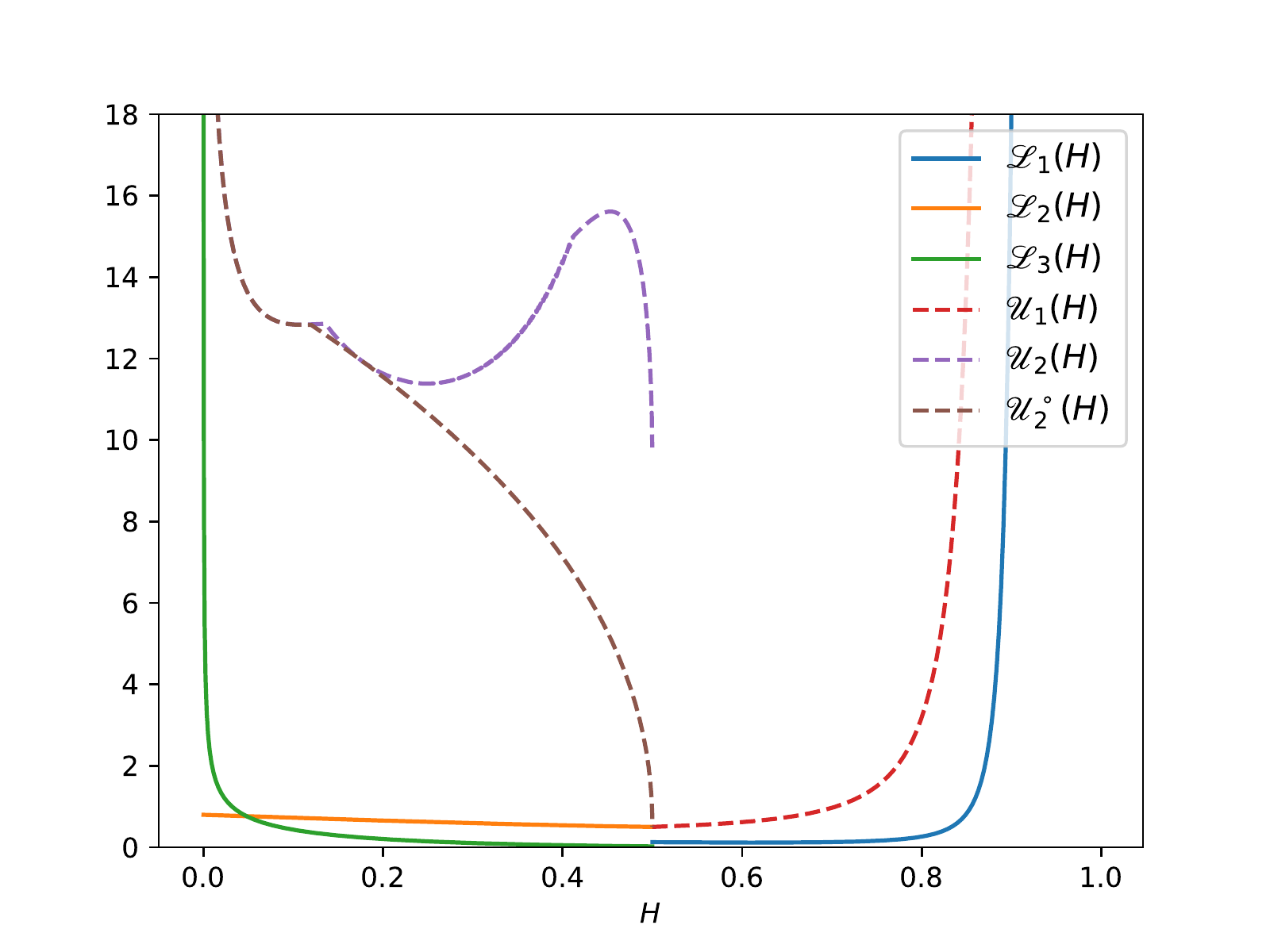}
      \caption{All upper and lower bounds for $\mathscr M(H)$ derived in this work.
      }\label{fig:all_bounds}
\end{figure}

We proceed by stating some of our results. With ${\mathscr N}$ denoting a standard normal random variable, we define, for $H\in(0,1)$,
\[{
\kappa(H):= {\mathbb E}\left[\,| {\mathscr N} |^{\frac{1}{1-H}}\,\right],
}\]
which can be given explicitly in terms of the Gamma function (see Lemma \ref{KAPPA}).
The first main result concerns the behavior of ${\mathscr M}(H)$ for $H\downarrow 0$ and $H\uparrow 1$ respectively.
\begin{theorem}\label{thm:infsup} We have
\begin{equation}\label{eq:infsup0}
0.2055 \approx \frac{1}{2\sqrt{\pi e \log 2}} \leqslant \liminf_{H\downarrow 0} \frac{{\mathscr M}(H)}{\sqrt{H}} \leqslant \limsup_{H\downarrow 0}  \frac{{\mathscr M}(H)}{\sqrt{H}} \leqslant  1.695,
\end{equation}
and
\begin{equation}\label{eq:infsup1}
\textcolor{black}{ \liminf_{H\uparrow 1} \frac{{\mathscr M}(H)}{(1-H)\,\kappa({H})}\geqslant
\frac{1}{2e}},\:\:\:\:\:\:
 \limsup_{H\uparrow 1}  \frac{{\mathscr M}(H)}{\kappa({H})}\leqslant
 \frac{1}{2}.
\end{equation}
\end{theorem}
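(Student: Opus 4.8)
The plan is to handle the two endpoints separately, reducing each to an explicit computation. For $H\uparrow1$ the lower bound comes from a Sudakov--Fernique comparison. Since $r\mapsto r^H$ is subadditive on $[0,\infty)$ when $H\in(0,1]$, one has $(s^H-t^H)^2\le|s-t|^{2H}$ for all $s,t\ge0$; hence the Gaussian processes $X_t:=B_H(t)-t$ and $Y_t:=t^HB_H(1)-t$ satisfy $\e X_t=\e Y_t=-t$ and $\e[(Y_s-Y_t)^2]=(s^H-t^H)^2+(s-t)^2\le|s-t|^{2H}+(s-t)^2=\e[(X_s-X_t)^2]$. Sudakov--Fernique (applied on finite sets of times and passed to the limit, legitimate since both suprema are a.s.\ attained and integrable) gives ${\mathscr M}(H)=\e[\sup_{t\ge0}X_t]\ge\e[\sup_{t\ge0}Y_t]$, and the right-hand side is explicit: on $\{B_H(1)=z\}$ the supremum of $t\mapsto t^Hz-t$ equals $(1-H)H^{H/(1-H)}z^{1/(1-H)}$ for $z>0$ and $0$ for $z\le0$, so with $B_H(1)\eqd{\mathscr N}$ and $\e[({\mathscr N}\vee0)^{1/(1-H)}]=\tfrac12\kappa(H)$ we get $\e[\sup_{t\ge0}Y_t]=\tfrac12(1-H)H^{H/(1-H)}\kappa(H)$. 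Dividing by $(1-H)\kappa(H)$ and letting $H\uparrow1$, with $H^{H/(1-H)}=\exp(\tfrac{H\log H}{1-H})\to e^{-1}$, yields the constant $\tfrac1{2e}$. The companion bound $\limsup_{H\uparrow1}{\mathscr M}(H)/\kappa(H)\le\tfrac12$ I would deduce from the non-asymptotic estimate ${\mathscr M}(H)\le\tfrac12\kappa(H)$ (sharp at $H=\tfrac12$): a first-moment bound on $\p(\sup_{t\ge0}\{B_H(t)-t\}>u)$ via a union bound over a geometric grid of times, with Borell--TIS controlling $\sup B_H$ on each block, integrates against $u$ (after the substitution $v\propto u^{1-H}$ and an integration by parts) to a multiple of $\kappa(H)$, and optimizing the grid returns $\tfrac12$.

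For $H\downarrow0$, ${\mathscr M}(H)$ diverges, so I would first reduce it to the driftless finite-horizon supremum $m_H:=\e[\sup_{t\in[0,1]}B_H(t)]$ and show $m_H$ grows like $H^{-1/2}$. The inequality ${\mathscr M}(H)\ge m_H-1$ is immediate (restrict to $[0,1]$, where $t\le1$). For the reverse, set $T_H:=m_H^{1/(1-H)}$, the time at which the drift reaches the typical size of the supremum, so that $T_H^{1-H}=m_H$, equivalently $T_H^Hm_H=T_H$. On $[0,T_H]$, self-similarity gives $\sup_{t\in[0,T_H]}B_H(t)\eqd T_H^H\sup_{t\in[0,1]}B_H(t)$ with $T_H^H=\exp(\tfrac H{1-H}\log m_H)\to1$; on the block $[2^jT_H,2^{j+1}T_H]$, dominating $\sup B_H$ by $\sup_{[0,2^{j+1}T_H]}B_H$, whose mean equals $2^{(j+1)H}T_H$, and applying Borell--TIS, the contribution of $\sup\{B_H(t)-t\}$ is $O(1)$ for $j=0$ and exponentially small for $j\ge1$. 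Summing, ${\mathscr M}(H)\le T_H^Hm_H+O(1)=m_H(1+o(1))$, so both bounds in \eqref{eq:infsup0} reduce to estimates on $H^{1/2}m_H$.

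The canonical metric of $B_H$ on $[0,1]$ is $d(s,t)=|s-t|^H$, with covering number $N(\varepsilon)\asymp\varepsilon^{-1/H}$ for $\varepsilon\in(0,1)$; the upper bound on $m_H$ then follows from a Dudley-type entropy estimate (in the refined form the paper develops for $\e[\sup_{t\in[0,1]}B_H(t)]$), whose $H^{1/2}$-scaled limit as $H\downarrow0$ is $1.695$. For the lower bound I would fix $n$, take the $2^n+1$ equally spaced times $\{j2^{-n}\}$ in $[0,1]$, and compare via Sudakov--Fernique (using $|t_i-t_j|^H\ge2^{-nH}$) to $2^n$ i.i.d.\ centred Gaussians of variance $\tfrac12\,2^{-2nH}$, which gives $m_H\ge2^{-nH-1/2}\,\e[\max_{1\le i\le2^n}{\mathscr N}_i]$; bounding $\e[\max_{1\le i\le2^n}{\mathscr N}_i]$ from below by a Mills-ratio argument (the origin of the factor $\sqrt\pi$) and optimizing over $n$, whose optimum sits at $n\asymp(2H\log2)^{-1}$ (the origin of the $\log2$), yields $\liminf_{H\downarrow0}H^{1/2}m_H\ge\tfrac1{2\sqrt{\pi e\log2}}$.

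The hard part is the sharp constants. For $H\uparrow1$ the lower bound is essentially automatic once the comparison process $t^HB_H(1)-t$ has been identified, and the two upper bounds are soft. For $H\downarrow0$, by contrast, any single-time or crude Sudakov estimate loses the $H^{-1/2}$ growth entirely, so the dyadic multi-scale structure is essential; the obstacle is to push a quantitative lower bound for the expected maximum of i.i.d.\ Gaussians through the optimization over $n$ while simultaneously keeping the errors in ${\mathscr M}(H)=m_H(1+o(1))$ and $T_H^H=1+o(1)$ controlled tightly enough that, after the $H^{1/2}$-rescaling, exactly the value $\tfrac1{2\sqrt{\pi e\log2}}$ survives.
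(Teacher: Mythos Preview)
Your approach differs from the paper's in several places, and one of these differences is a genuine gap.

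\textbf{The gap: the upper bound for $H\uparrow 1$.} You assert ${\mathscr M}(H)\le\tfrac12\kappa(H)$ and sketch ``union bound over a geometric grid, Borell--TIS on each block, integrate, optimize the grid to recover $\tfrac12$''. This is not a proof, and it is unclear that any such argument yields the sharp constant $\tfrac12$ rather than a worse multiple of $\kappa(H)$. The paper's proof is quite different: for $H\ge\tfrac12$ it applies Slepian's inequality to compare $B_H(t)$ with the time-changed Brownian motion $B(t^{2H})$ (the increment-variance ordering goes the right way precisely when $H\ge\tfrac12$), and then invokes the nontrivial inequality
\[
\p\Big(\sup_{t\ge0}\{B(t^{2H})-t\}>u\Big)\ \le\ \int_0^\infty \frac{1}{\sqrt{2\pi t^{2H}}}\exp\Big(-\frac{(t+u)^2}{2t^{2H}}\Big)\,{\rm d}t
\]
from \cite{D2}. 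Integrating the right-hand side in $u$ gives exactly $\int_0^\infty\p(|{\mathscr N}|>t^{1-H})\,{\rm d}t=\tfrac12\kappa(H)$. The constant $\tfrac12$ thus comes from an exact identity, not an optimized grid; your sketch does not supply a substitute.

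\textbf{Where your alternative routes are correct.} Your Sudakov--Fernique lower bound with $Y_t=t^HB_H(1)-t$ is valid and elegant: the increment comparison $(s^H-t^H)^2\le|s-t|^{2H}$ holds, the means match, and the explicit optimization of $t\mapsto t^Hz-t$ returns exactly the paper's ${\mathscr L}_1(H)=\tfrac12(1-H)H^{H/(1-H)}\kappa(H)$. The paper instead uses the ``largest term'' bound $\p(\sup\ge u)\ge\p(B_H(t_u)-t_u>u)$ and integrates in $u$; both arguments produce the same expression. For $H\downarrow0$, both you and the paper reduce to showing ${\mathscr M}(H)\sim m_H$ and then quote the Borovkov et al.\ bounds $C^-/\sqrt{H}\le m_H\le C^+/\sqrt{H}$. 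Your reduction (the trivial ${\mathscr M}(H)\ge m_H-1$ and the dyadic-block Borell--TIS upper bound with pivot $T_H=m_H^{1/(1-H)}$) is more direct; the paper instead passes through the intermediate quantity $\mu(H)=\e[(\sup_{[0,1]}B_H)^{1/(1-H)}]$, showing ${\mathscr M}(H)/\mu(H)\to1$ via its Propositions~\ref{thm:LB_CJB}--\ref{thm:UB_Cinv} and then $\mu(H)/m_H\to1$ via the moment bound in Lemma~\ref{lem:a-moment_borel}. Either reduction works; yours avoids the extra moment but requires carefully summing the block tails, while the paper's route plugs directly into bounds already established for other purposes.
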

The asymptotic inequalities (\ref{eq:infsup1}), in combination with the exact value of $\kappa({H})$
derived in Lemma \ref{lem:kap} in Section \ref{sec:SUP}, straightforwardly imply that ${\mathscr M}(H)$ and $\kappa(H)$ `logarithmically match' as $H\uparrow 1$, in the sense that
\[
\lim_{H\uparrow 1}\frac{\log {\mathscr M}(H)}{\log \kappa({H})}=1.
\]
The second main result concerns bounds for $H\in(0,1)$. These differ by at most a multiplicative constant that is uniformly bounded over $H\in(0,\frac{1}{2}]$, whereas they differ by at most a factor $e/(1-H)$ for $H\in[\frac{1}{2},1).$  We define
\begin{align*}
\mathscr U(H) :=
\begin{cases}
\mathscr U_2^\circ(H), & H\in(0,\tfrac{1}{2}] \\
\mathscr U_1(H), & H\in[\tfrac{1}{2},1)
\end{cases}
\quad\quad
\mathscr L(H) :=
\begin{cases}
\max\{\mathscr L_2(H), \mathscr L_3(H)\}, & H\in(0,\tfrac{1}{2}] \\
\mathscr L_1(H), & H\in[\tfrac{1}{2},1)
\end{cases}
\end{align*}
with functions $\mathscr U_1(\cdot), \mathscr L_1(\cdot), \mathscr L_2(\cdot), \mathscr L_3(\cdot)$ that are defined in Propositions \ref{thm:AMI}--\ref{thm:LB_CJB}, and a function $\mathscr U_2^\circ(\cdot)$ that is defined in Corollary \ref{cor:sudakov_Hlow} and that uses the function $\mathscr U_2(\cdot)$ from Proposition
\ref{thm:UB_Cinv}.
As such, $\mathscr U(H)$ and $\mathscr L(H)$ are the best upper and lower bound for $\mathscr M(H)$ that we were able to find. It is noted that all these functions can be computed through elementary numerical procedures.

\begin{theorem}\label{thm:slon}
We have  ${\mathscr L}(H)$ and ${\mathscr U}(H)$ that satisfy
\[{\mathscr L}(H)\leqslant {\mathscr M}(H) \leqslant {\mathscr U}(H);\]
such that
\[\sup_{H\in(0,\frac{1}{2}]}\frac{{\mathscr U}(H)}{{\mathscr L}(H)} \leqslant \textcolor{black}{18.063},\]
whereas for $H\in[\frac{1}{2},1)$ it holds that $2/(1-H) \leqslant {\mathscr U}(H)/{\mathscr L}(H) \leqslant e/(1-H).$
\end{theorem}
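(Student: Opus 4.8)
The plan is to assemble the theorem from the component bounds established in Propositions~\ref{thm:AMI}--\ref{thm:LB_CJB}, Proposition~\ref{thm:UB_Cinv} and Corollary~\ref{cor:sudakov_Hlow}, and then to verify the two ratio estimates separately on $[\tfrac12,1)$ and on $(0,\tfrac12]$. The sandwich $\mathscr L(H)\le\mathscr M(H)\le\mathscr U(H)$ is essentially immediate: for $H\in[\tfrac12,1)$ it is the pair $\mathscr L_1(H)\le\mathscr M(H)\le\mathscr U_1(H)$ coming from Proposition~\ref{thm:AMI} (and the accompanying lower-bound propositions); for $H\in(0,\tfrac12]$ the upper estimate is $\mathscr M(H)\le\mathscr U_2^\circ(H)$ from Corollary~\ref{cor:sudakov_Hlow}, while $\mathscr M(H)\ge\mathscr L_2(H)$ and $\mathscr M(H)\ge\mathscr L_3(H)$ are two independent lower bounds, so that also $\mathscr M(H)\ge\max\{\mathscr L_2(H),\mathscr L_3(H)\}$. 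At the shared endpoint $H=\tfrac12$ both descriptions are legitimate, and whichever branch one uses one has $\mathscr L(\tfrac12)\le\tfrac12=\mathscr M(\tfrac12)\le\mathscr U(\tfrac12)$; this settles the first displayed inequality.

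For $H\in[\tfrac12,1)$ one has $\mathscr U(H)/\mathscr L(H)=\mathscr U_1(H)/\mathscr L_1(H)$, and the plan is to substitute the (semi-)closed forms of $\mathscr U_1$ and $\mathscr L_1$ from Proposition~\ref{thm:AMI}. The factor $\kappa(H)$ together with the common Gaussian normalisation cancels, and what survives has the form $\mathscr U_1(H)/\mathscr L_1(H)=\rho(H)/(1-H)$ for an explicit elementary function $\rho$. It then remains to show $2\le\rho(H)\le e$ on $[\tfrac12,1)$, which I would do by a sign analysis of $\rho'$ (or, if $\rho$ is not monotone, by a crude bound on $\rho'$) together with the endpoint values $\rho(\tfrac12)$ and $\lim_{H\uparrow1}\rho(H)=e$; the latter is exactly what ties the calculation to the asymptotic constants $\tfrac12$ and $\tfrac{1}{2e}$ of~(\ref{eq:infsup1}). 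This is routine one-variable calculus once the closed forms are in hand.

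The substantive part is the bound $\sup_{H\in(0,1/2]}R(H)\le 18.063$, where $R(H):=\mathscr U_2^\circ(H)/\max\{\mathscr L_2(H),\mathscr L_3(H)\}$. Since $\mathscr U_2^\circ$ is itself the outcome of the numerical minimisation in Corollary~\ref{cor:sudakov_Hlow}, the first step is to pin it down by a fully explicit, rigorously valid majorant --- for instance by freezing the free parameter in that minimisation at a judiciously chosen value, or by bounding $\mathscr U_2^\circ(H)$ by the explicit $\mathscr U_2(H)$ of Proposition~\ref{thm:UB_Cinv} on whatever sub-range the numerical refinement is not needed --- while the denominator, being a maximum of two explicit functions with an explicitly locatable switch-over, needs only a lower estimate. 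The limit $H\downarrow0$ is handled on its own: there $R(H)$ tends to the ratio of the leading $\sqrt H$-coefficients, which by the upper branch of Theorem~\ref{thm:infsup} and the $\sqrt H$-asymptotics of $\mathscr L_2$ (resp.\ $\mathscr L_3$) lies well below $18.063$, so the supremum over a neighbourhood $(0,\delta]$ is controlled; on the remaining compact interval $[\delta,\tfrac12]$ one certifies $R(H)\le 18.063$ by a rigorous grid evaluation, using one-sided Lipschitz or monotonicity bounds on numerator and denominator to pass from finitely many sampled points to the whole interval. I expect this last step --- producing a clean, provably correct, $H$-uniform outer bound on the numerically optimised quantity $\mathscr U_2^\circ(H)$, so that the grid argument actually closes --- to be the main obstacle; everything else is elementary algebra, one-variable calculus, and a finite computation.
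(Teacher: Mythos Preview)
Your plan for $H\in[\tfrac12,1)$ is exactly the paper's: after cancelling the common factor $\tfrac12\kappa(H)$ in $\mathscr U_1/\mathscr L_1$ one is left with $\nu(H)^{-1/(1-H)}=H^{H/(H-1)}/(1-H)$, and the paper simply asserts that $H\mapsto H^{H/(H-1)}$ increases monotonically from $2$ (at $H=\tfrac12$) to $e$ (as $H\uparrow1$). That is your $\rho(H)$; a one-line derivative computation confirms the monotonicity, so you need not hedge with ``if $\rho$ is not monotone''.

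For $H\in(0,\tfrac12]$ you are proposing considerably more than the paper actually does. The paper's proof of the bound $18.063$ consists of a single sentence: ``follows directly from the numerical computations underlying Figure~\ref{fig:slon}''. There is no explicit majorant for $\mathscr U_2^\circ$, no separate treatment of $H\downarrow0$, and no certified grid argument with Lipschitz control; the authors simply plot the ratio and read off its maximum. Your outline --- freezing parameters to get explicit outer bounds, isolating the $H\downarrow0$ endpoint via the $\sqrt H$ asymptotics, then a rigorous interval-arithmetic sweep on $[\delta,\tfrac12]$ --- would, if carried out, yield a genuinely verified inequality, which is strictly more than what the paper supplies. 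So your ``main obstacle'' is real, but it is an obstacle the paper itself does not clear; do not be surprised that the paper's argument here is a numerical appeal rather than a proof in the usual sense.
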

The proofs of Theorems \ref{thm:infsup} and Theorem \ref{thm:slon} will be given later in the paper, as well as the procedure to compute
${\mathscr L}(H)$ and ${\mathscr U}(H)$ in
Theorem \ref{thm:slon}. The bounds $\mathscr U_1(\cdot), \mathscr L_1(\cdot), \mathscr L_2(\cdot), \mathscr L_3(\cdot)$ are explicit  functions of $H$, whereas the tightest upper bound $\mathscr U_2^\circ(\cdot)$ follows by performing a numerical procedure on the (semi-)closed-form upper bound $\mathscr U_2(\cdot)$.
The resulting bounds are summarized in Figure~\ref{fig:all_bounds}. We note that the bounds are tight in $H=\frac{1}{2}.$
Notably, as a by-product of the proof for the upper bound ${\mathscr U_2}^\circ(\cdot)$, we present in Corollary \ref{cor:sudakov_Hlow_supXt} for the regime that $H\in(0,\frac{1}{2}]$ a new upper bound on  the mean of $\sup_{t\in[0,1]} B_H(t)$. Also this bound
is tight at $H=\frac{1}{2}$,
and improves the upper bound that was established in
\cite{borovkov2016new}.

This paper is organized as follows. As it turns out, we have to separately consider
the cases $H\in(0,\frac{1}{2}]$ and $H\in[\frac{1}{2},1)$.
As in the former case   $\Var\, B_H(t)$ grows slower than linearly, in the physics literature  \cite{MERO,METZ} this regime is sometimes referred to as {\it subdiffusive}. Analogously, in the latter case $\Var\, B_H(t)$ is superlinear in $t$, explaining why this regime is called {\it superdiffusive}.
Section \ref{sec:SUP}, dealing with the superdiffusive case, presents an upper and lower bound that have a strongly similar shape. Then, in Section \ref{sec:SUB}, we focus on the subdiffusive regime, with an upper bound and two lower bounds (one of them being tighter for small $H\in(0,\frac{1}{2}]$, the other one being tighter for larger $H$). This section also presents additional bounds and a procedure to numerically improve the upper bound. Section \ref{thm:proofs} covers the proofs of Theorems~\ref{thm:infsup}  and \ref{thm:slon}. The paper is concluded in Section \ref{CONC}.

\section{Superdiffusive regime}\label{sec:SUP}
In this section we consider the case that $H\in[\frac{1}{2},1)$.  We start our exposition with a useful auxiliary result; see also \cite{WIN}.

\begin{lemma} \label{lem:kap} For $H\in(0,1)$,
\[\kappa(H) =\sqrt{\frac{1}{\pi}}
(\sqrt{2})^{\frac{1}{1-H}} \Gamma\left(\frac{2-H}{2-2H}\right).\]
\label{KAPPA}
\end{lemma}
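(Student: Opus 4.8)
The plan is to recognise $\kappa(H)$ as a fractional absolute moment of a standard normal variable and to reduce it to the classical Gamma integral. Writing $p:=\frac{1}{1-H}\in(1,\infty)$, so that $\kappa(H)={\mathbb E}\,|{\mathscr N}|^{p}$, I would first use the symmetry of the standard normal density to write
\[
{\mathbb E}\,|{\mathscr N}|^{p}=\frac{2}{\sqrt{2\pi}}\int_{0}^{\infty}x^{p}e^{-x^{2}/2}\,{\rm d}x,
\]
and then apply the substitution $u=x^{2}/2$ (so $x=\sqrt{2u}$, ${\rm d}x=\frac{1}{\sqrt 2}u^{-1/2}\,{\rm d}u$) to convert the right-hand side into $\frac{2^{p/2}}{\sqrt{\pi}}\,\Gamma\!\big(\frac{p+1}{2}\big)$. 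An equivalent route is to observe that ${\mathscr N}^{2}$ is $\mathrm{Gamma}(\tfrac12,2)$-distributed and invoke the scaling rule for Gamma moments, namely ${\mathbb E}\,Y^{s}=2^{s}\Gamma(\tfrac12+s)/\Gamma(\tfrac12)$ for $Y\sim\mathrm{Gamma}(\tfrac12,2)$; taking $s=p/2$ lands on the same expression.

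The remaining step is purely algebraic: substitute $p=\frac{1}{1-H}$, note that $2^{p/2}=(\sqrt2)^{1/(1-H)}$, and simplify the Gamma argument as
\[
\frac{p+1}{2}=\frac{\tfrac{1}{1-H}+1}{2}=\frac{1+(1-H)}{2(1-H)}=\frac{2-H}{2-2H},
\]
which reproduces the claimed identity $\kappa(H)=\sqrt{1/\pi}\,(\sqrt2)^{1/(1-H)}\,\Gamma\!\big(\frac{2-H}{2-2H}\big)$.

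I do not anticipate any genuine obstacle here; the only thing requiring a little care is the bookkeeping of the powers of $2$ and $\sqrt2$ in the change of variables. As a consistency check one can look at the Brownian case $H=\tfrac12$, where the formula yields $\sqrt{1/\pi}\,(\sqrt2)^{2}\,\Gamma(\tfrac32)=\tfrac{2}{\sqrt\pi}\cdot\tfrac{\sqrt\pi}{2}=1={\mathbb E}\,{\mathscr N}^{2}$, as it must.
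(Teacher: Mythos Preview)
Your proposal is correct and follows essentially the same approach as the paper: write the absolute moment as $\sqrt{2/\pi}\int_0^\infty x^{p}e^{-x^2/2}\,{\rm d}x$ with $p=\tfrac{1}{1-H}$, perform the substitution $u=x^2/2$ (the paper writes it as $x^2=2y$), and identify the resulting integral with the Gamma function. The bookkeeping and the sanity check at $H=\tfrac12$ are fine.
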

\begin{proof}
Observe that, performing the change-of-variable $x^2=2y$,
\[\kappa(H) = \sqrt{\frac{2}{\pi}} \int_0^\infty e^{-x^2/2} x^{\frac{1}{1-H}}{\rm d}x=\sqrt{\frac{1}{\pi}}
(\sqrt{2})^{\frac{1}{1-H}} \int_0^\infty e^{-y} y^{\frac{H}{2(1-H)}}{\rm d}y,\]
which, after interpreting the integral in terms of the Gamma function, proves the claim.
\end{proof}

With this lemma at our disposal, we can present our lower bound. It relies on the `principle of the largest term': the probability of a union of events is bounded from below by the probability of the most likely among these events.
In the sequel, the following quantity features regularly:
\[
\nu(H):=H^H(1-H)^{1-H}.\]

\begin{proposition}\label{thm:AMI}
For $H\in[\frac{1}{2},1)$ we have ${\mathscr M}(H)\geqslant {\mathscr L}_1(H)$, where
\begin{equation}\label{eq:LB_AMI}
{\mathscr L}_1(H) := \tfrac{1}{2}\nu(H)^{\tfrac{1}{1-H}} \cdot \kappa(H).
\end{equation}
\end{proposition}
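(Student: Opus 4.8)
The plan is to bound the all-time supremum from below by the value of the process at a single, well-chosen time point $t_0$, and then optimize over $t_0$. Since $\sup_{t\geq 0}\{B_H(t)-t\}\geq B_H(t_0)-t_0$ for every fixed $t_0\geq 0$, taking expectations and using $\mathbb E\,B_H(t_0)=0$ would only give the trivial bound $-t_0$; the point of the "principle of the largest term" is instead to first pass to the tail. Write $M:=\sup_{t\geq 0}\{B_H(t)-t\}$ and note $\mathbb E\,M=\int_0^\infty \mathbb P(M>u)\,\d u$ (the negative part is handled by noting $M\geq 0$, or more carefully by splitting at $0$; in fact $M\geq B_H(0)-0=0$). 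For each $u>0$ we have $\mathbb P(M>u)\geq \sup_{t\geq 0}\mathbb P(B_H(t)-t>u)=\sup_{t\geq 0}\mathbb P(\mathscr N> (u+t)/t^H)$, using $B_H(t)\eqd t^H\mathscr N$. So the first step is: for fixed $u$, minimize $g_u(t):=(u+t)/t^H$ over $t>0$.

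The second step is the elementary calculus of $g_u$. Setting $g_u'(t)=0$ gives $t^H - H(u+t)t^{H-1}=0$, i.e. $t=H(u+t)$, hence the minimizer is $t^\ast=\tfrac{H}{1-H}u$ and then $u+t^\ast=\tfrac{u}{1-H}$, so $g_u(t^\ast)=\dfrac{u/(1-H)}{\big(Hu/(1-H)\big)^H}=u^{1-H}\cdot\dfrac{(1-H)^{H-1}}{H^H(1-H)^{-H}}$. Collecting the powers of $H$ and $1-H$, this simplifies to $g_u(t^\ast)=u^{1-H}\,\nu(H)^{-1}$ with $\nu(H)=H^H(1-H)^{1-H}$ as defined. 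Therefore $\mathbb P(M>u)\geq \mathbb P\big(\mathscr N> u^{1-H}/\nu(H)\big)$ for all $u>0$.

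The third step is to integrate this in $u$ and recognize the result. We get
\[
\mathscr M(H)\geq \int_0^\infty \mathbb P\!\left(\mathscr N>\frac{u^{1-H}}{\nu(H)}\right)\d u
=\int_0^\infty \mathbb P\!\left(\big(\nu(H)\,\mathscr N\big)^{\frac{1}{1-H}}>u,\ \mathscr N>0\right)\d u
=\mathbb E\!\left[\big(\nu(H)\,\mathscr N\big)^{\frac{1}{1-H}}\mathbf 1_{\{\mathscr N>0\}}\right],
\]
where in the middle step we used that $v\mapsto (\nu(H)v)^{1/(1-H)}$ is an increasing bijection of $(0,\infty)$. By symmetry of $\mathscr N$, $\mathbb E[\,|\mathscr N|^{1/(1-H)}\mathbf 1_{\{\mathscr N>0\}}\,]=\tfrac12\,\mathbb E|\mathscr N|^{1/(1-H)}=\tfrac12\kappa(H)$, so pulling out the constant $\nu(H)^{1/(1-H)}$ yields exactly $\mathscr M(H)\geq \tfrac12\,\nu(H)^{1/(1-H)}\kappa(H)=\mathscr L_1(H)$, as claimed. (The restriction $H\in[\tfrac12,1)$ is not actually needed for the argument to be valid, but it is the range in which this particular bound is recorded.)

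The argument is essentially routine; the only mildly delicate point is the interchange of expectation and the integral representation $\mathbb E\,M=\int_0^\infty\mathbb P(M>u)\,\d u$, which requires knowing $M$ is integrable (equivalently $\mathscr M(H)<\infty$) — this is standard for fBm with negative drift and may be cited, e.g. from the finiteness results underlying \eqref{as.1}. Everything else is the one-variable minimization of $g_u$ and bookkeeping of the exponents of $H$ and $1-H$, which I would not expect to present in detail.
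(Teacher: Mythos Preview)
Your proof is correct and follows essentially the same approach as the paper: bound the tail $\pp(M>u)$ below by $\pp(B_H(t_u)-t_u>u)$ at the optimal $t_u=\tfrac{H}{1-H}u$, obtain $\pp(\mathscr N>u^{1-H}/\nu(H))$, and integrate. The only cosmetic difference is that the paper evaluates the resulting integral via the substitution $v=u^{1-H}/\nu(H)$ followed by integration by parts, whereas you recognize it directly as $\e\big[(\nu(H)\mathscr N)^{1/(1-H)}\mathbf 1_{\{\mathscr N>0\}}\big]$ via Fubini; both routes land on $\tfrac12\nu(H)^{1/(1-H)}\kappa(H)$, and your observation that the argument works for all $H\in(0,1)$ is also noted in the paper.
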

As noted from the proof, the lower bound ${\mathscr L}_1(H)$ holds in the entire domain, i.e., for any $H\in(0,1)$.
\begin{proof} This proof is due to \cite{IN}.
Let $t_u$ be the maximizer, for a given $u>0$, of the mapping \[t\mapsto\p(B_H(t)-t > u)=
1-\Phi\left(\frac{u+t}{t^{H}}\right),\] with $\Phi(\cdot)$ the cumulative distribution function of a standard normal random variable.  As $\Phi(\cdot)$ is increasing $t_u$ is the minimizer of $(u+t)/t^{H}$, i.e.,
\[t_u = u\,\frac{H}{1-H}.\]
This means that we have the lower bound
\begin{align*}
{\mathscr M}(H)&=\int_0^\infty \p\left(\sup_{t\geqslant 0}\{B_H(t) - t\} > u\right){\rm d}u \geqslant \int_0^\infty \p(B_H(t_u) - t_u > u)\,{\rm d}u\\&=\int_0^\infty \left(1-\Phi\left(\frac{u+t_u}{t_u^{H}}\right)\right){\rm d}u=\int_0^\infty \left(1-\Phi\left(\frac{u^{1-H}}{\nu(H)}\right)\right){\rm d}u\\
&=H^{\frac{H}{1-H}}\int_0^\infty v^{\frac{H}{1-H}}\left(1-\Phi(v)\right){\rm d}v.
\end{align*}
By an elementary application of integration by parts, we obtain that the integral in the last expression can be written as
\begin{align*}\int_0^\infty (1-\Phi(v)) \,{\rm d}\left((1-H) v^{\frac{1}{1-H}}\right) =
(1-H) \int_0^\infty v^{\frac{1}{1-H}} \Phi'(v)\,{\rm d}v=
\tfrac{1}{2}(1-H)\,\kappa(H),\end{align*}
from which \eqref{eq:LB_AMI} follows.
\end{proof}

We proceed by deriving an upper bound. It will make use of the following results that have been proven in D\c{e}bicki et al.\
\cite{D1,D2}. We define
\[\lambda(u,H):=\left(\int_0^\infty (2\pi t^{2H})^{-1/2}\exp(-(t+u)^2/(2t^{2H})){\rm d}t\right)^{-1}.\]

\begin{lemma}\label{thm:krzysD}
Denote by $B(\cdot)\equiv B_{\frac{1}{2}}(\cdot)$ a standard Brownian motion. Then, for any $H\in(0,1)$ we have
\begin{equation}\label{eq:KrzysD}
2-2H \leqslant \lambda(u,H)\cdot\p\left(\sup_{t\geqslant 0} \{B (t^{2H}) - t\} > u\right) \leqslant 2.
\end{equation}
For $H\geqslant \frac{1}{2}$, a tighter upper bound holds:
\begin{equation}\label{eq:DMR_ineq}
 \lambda(u,H)\cdot\p\left(\sup_{t\geqslant 0} \{B(t^{2H}) - t \}> u\right) \leqslant 1.
\end{equation}
\end{lemma}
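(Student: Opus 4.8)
The plan is to recognise $\lambda(u,H)^{-1}$ as an expected occupation density and to reduce both inequalities to a uniform estimate. Put $\zeta(t):=B(t^{2H})-t$, so $\zeta(t)\sim\mathcal N(-t,t^{2H})$ and
\[
\lambda(u,H)^{-1}=\int_0^\infty (2\pi t^{2H})^{-1/2}\exp\!\big(-(t+u)^2/(2t^{2H})\big)\,{\rm d}t=\int_0^\infty f_{\zeta(t)}(u)\,{\rm d}t=\e\,L_u,
\]
where $L_u$ is the occupation density of $\zeta$ at level $u$ over $[0,\infty)$. After the deterministic time change $t\mapsto t^{2H}$ the process $\zeta$ is a Brownian motion minus a drift that is $C^1$ on $(0,\infty)$, hence a continuous semimartingale carrying a jointly continuous local time; since $\zeta(0)=0<u$ and $\zeta(t)\to-\infty$ almost surely, the events $\{\sup_{t\ge0}\zeta(t)>u\}$ and $\{L_u>0\}$ agree up to a $\p$-null set, so
\[
\lambda(u,H)\cdot\p\Big(\sup_{t\ge0}\zeta(t)>u\Big)=\frac{\p(L_u>0)}{\e\,L_u}=\frac{1}{\e[L_u\mid L_u>0]}.
\]
Everything thus comes down to bounding the conditional mean local time $\e[L_u\mid L_u>0]$ above and below, uniformly in $u>0$.

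To identify this quantity, set $\tau:=\inf\{t\ge0:\zeta(t)=u\}$. No local time at $u$ is accrued before $\tau$, and applying the strong Markov property of $B$ at the stopping time $\tau^{2H}$ gives $\zeta(\tau+r)=u+\widehat B\big((\tau+r)^{2H}-\tau^{2H}\big)-r$ for $r\ge0$, with $\widehat B$ a Brownian motion independent of $\tau$. Hence $\e[L_u\mid\tau=a]=G_H(a)$, where
\[
G_H(a):=\int_0^\infty\frac{1}{\sqrt{2\pi\,w_a(r)}}\,\exp\!\Big(-\frac{r^2}{2\,w_a(r)}\Big)\,{\rm d}r,\qquad w_a(r):=(a+r)^{2H}-a^{2H},
\]
so $\e[L_u\mid L_u>0]=\e[G_H(\tau)\mid\tau<\infty]$, which lies between $\inf_{a\ge0}G_H(a)$ and $\sup_{a\ge0}G_H(a)$. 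The boundary values are explicit: the substitution $y=r^{2-2H}$ — of the same flavour as in the proofs of Lemma~\ref{lem:kap} and Proposition~\ref{thm:AMI} — gives $G_H(0)=(2-2H)^{-1}$, and the scaling $r=a^{2H-1}\rho$ gives $\lim_{a\to\infty}G_H(a)=1$; these are exactly the reciprocals of the constants $2-2H$ and $1$ in the statement.

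The genuinely technical point — the one I expect to be the main obstacle — is the uniform control of $G_H(a)$ on $[0,\infty)$: one must show $G_H(a)\ge\tfrac12$ for all $H\in(0,1)$ (which yields $\lambda(u,H)\,\p(\cdots)\le2$), that $G_H(a)$ does not exceed $(2-2H)^{-1}$ (which yields $\lambda(u,H)\,\p(\cdots)\ge2-2H$), and that $G_H(a)\ge1$ when $H\ge\tfrac12$ (which yields the sharper $\lambda(u,H)\,\p(\cdots)\le1$). The natural device is to compare $w_a(r)=(a+r)^{2H}-a^{2H}$ with $w_0(r)=r^{2H}$ via the (sub-/super-)additivity of $x\mapsto x^{2H}$: convexity gives $w_a(r)\ge r^{2H}$ for $H\ge\tfrac12$, concavity gives $w_a(r)\le r^{2H}$ for $H\le\tfrac12$, and $a\mapsto w_a(r)$ is monotone at fixed $r$. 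Since the integrand $w\mapsto(2\pi w)^{-1/2}e^{-r^2/(2w)}$ is unimodal in $w$ with peak at $w=r^2$, one should split the $r$-integral at the value where $w_a(r)$ meets $r^2$ and sandwich the two pieces; the one-sided comparison available for $H\ge\tfrac12$ is precisely what forces $G_H(a)\ge1$ and hence the superdiffusive refinement. I would pin down $G_H(0)$ and $\lim_{a\to\infty}G_H(a)$ first, as these fix which constants can possibly be sharp, and then push through the convexity-driven estimate of this Gaussian-type integral, which is the only part demanding real care.
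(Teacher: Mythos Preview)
The paper itself offers no proof of this lemma: its ``proof'' is simply a citation to \cite{D1} and \cite{D2}. So there is no in-paper argument to compare against beyond noting that those references treat boundary-crossing estimates for time-changed Brownian motion, in spirit close to your set-up.

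Your framework is correct as far as it goes. The identification $\lambda(u,H)^{-1}=\e L_u$, the a.s.\ equality $\{L_u>0\}=\{\tau<\infty\}$, and the strong-Markov decomposition giving $\e[L_u\mid\tau<\infty]=\e[G_H(\tau)\mid\tau<\infty]$ are all valid, and your endpoint evaluations $G_H(0)=(2-2H)^{-1}$ and $\lim_{a\to\infty}G_H(a)=1$ are right. So the problem does reduce to controlling $G_H$.

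There is, however, a genuine gap, and it is more serious than you indicate. You assert that one must show ``$G_H(a)$ does not exceed $(2-2H)^{-1}$'' to obtain the lower bound $\lambda(u,H)\p(\cdots)\ge 2-2H$. But for $H<\tfrac12$ this uniform bound is \emph{false}: since $(2-2H)^{-1}<1$ while, by your own computation, $G_H(a)\to1$ as $a\to\infty$, one has $G_H(a)>(2-2H)^{-1}$ for all large $a$. Hence the route ``uniformly bound $G_H$ from above by its value at $0$'' cannot possibly deliver the lower inequality in \eqref{eq:KrzysD} on the subdiffusive side --- and that is precisely the inequality the paper actually uses (in Proposition~\ref{thm:LB_K2001}). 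Any proof of the lower bound must therefore bring in additional information about the conditional law of $\tau$ given $\tau<\infty$, not merely a pointwise estimate of $G_H$; this is where the boundary-crossing machinery of \cite{D1} enters, and it is not recoverable from the convexity/unimodality heuristic you sketch.

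For $H\ge\tfrac12$ your programme is internally consistent --- there both endpoints satisfy $G_H(0)=(2-2H)^{-1}\ge1$ and $G_H(\infty)=1$, so the target $G_H(a)\ge1$ (yielding \eqref{eq:DMR_ineq}) and $G_H(a)\le(2-2H)^{-1}$ are at least compatible with the boundary values. Even so, the suggested device (compare $w_a(r)$ with $r^{2H}$ and exploit unimodality of $w\mapsto(2\pi w)^{-1/2}e^{-r^2/(2w)}$ at $w=r^2$) is not sufficient as stated: for small $r$ both $w_a(r)$ and $r^{2H}$ lie on the \emph{decreasing} side of the peak, so the one-sided comparison $w_a(r)\ge r^{2H}$ pushes the integrand the wrong way there; the two regions do not ``sandwich'' without a further cancellation argument. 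What would actually close the superdiffusive case is monotonicity of $a\mapsto G_H(a)$, and that requires more than you have written.
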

\begin{proof}
See \cite{D1} for the proof of \eqref{eq:KrzysD} and \cite{D2} for the proof of \eqref{eq:DMR_ineq}.
\end{proof}

\begin{proposition}\label{thm:UB_K2001}
For $H\in[\frac{1}{2},1)$ we have ${\mathscr M}(H)\leqslant {\mathscr U}_1(H)$, where
\begin{equation}
{\mathscr U}_1(H)= \tfrac{1}{2}\cdot \kappa(H).
\end{equation}
\end{proposition}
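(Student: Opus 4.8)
The plan is to start from the tail representation ${\mathscr M}(H)=\int_0^\infty \p\big(\sup_{t\geqslant 0}\{B_H(t)-t\}>u\big)\,{\rm d}u$ and to bound the integrand by the corresponding tail of a time-changed Brownian supremum, for which Lemma~\ref{thm:krzysD} already supplies a sharp estimate. To this end I would introduce the drifted Gaussian processes $X_t:=B_H(t)-t$ and $Y_t:=B(t^{2H})-t$, $t\geqslant 0$. They have the same mean function $\mathbb{E}X_t=\mathbb{E}Y_t=-t$ and the same variance function $\Var X_t=\Var Y_t=t^{2H}$, whereas for $0\leqslant s\leqslant t$
\[
\Cov(X_s,X_t)-\Cov(Y_s,Y_t)=\tfrac12\big(s^{2H}+t^{2H}-(t-s)^{2H}\big)-s^{2H}=\tfrac12\big(t^{2H}-s^{2H}-(t-s)^{2H}\big)\geqslant 0,
\]
the inequality being the superadditivity $(a+b)^{2H}\geqslant a^{2H}+b^{2H}$ of $x\mapsto x^{2H}$ on $[0,\infty)$, which is valid precisely because $2H\geqslant 1$ (write $(a+b)^{2H}=a(a+b)^{2H-1}+b(a+b)^{2H-1}\geqslant a^{2H}+b^{2H}$).

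Hence $X$ dominates $Y$ in covariance while sharing its mean and variance functions, so Slepian's inequality --- applied to arbitrary finite collections of time points and then extended to the supremum over $[0,\infty)$ via the a.s.\ continuity of both sample paths --- yields $\p(\sup_{t\geqslant 0}\{B_H(t)-t\}>u)\leqslant \p(\sup_{t\geqslant 0}\{B(t^{2H})-t\}>u)$ for all $u>0$. Applying the tighter bound \eqref{eq:DMR_ineq} of Lemma~\ref{thm:krzysD}, which holds exactly for $H\geqslant\tfrac12$, we obtain $\p(\sup_{t\geqslant 0}\{B(t^{2H})-t\}>u)\leqslant\lambda(u,H)^{-1}$, and therefore, by Tonelli,
\[
{\mathscr M}(H)\leqslant \int_0^\infty \lambda(u,H)^{-1}\,{\rm d}u=\int_0^\infty \frac{1}{\sqrt{2\pi t^{2H}}}\left(\int_0^\infty e^{-(t+u)^2/(2t^{2H})}\,{\rm d}u\right){\rm d}t .
\]

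In the inner integral the substitution $v=(t+u)/t^{H}$ gives $\int_0^\infty e^{-(t+u)^2/(2t^{2H})}\,{\rm d}u=t^{H}\sqrt{2\pi}\,(1-\Phi(t^{1-H}))$, so that ${\mathscr M}(H)\leqslant\int_0^\infty (1-\Phi(t^{1-H}))\,{\rm d}t$. The further substitution $y=t^{1-H}$ then turns the right-hand side into $\tfrac1{1-H}\int_0^\infty y^{H/(1-H)}(1-\Phi(y))\,{\rm d}y$, and the integral occurring here is exactly the one evaluated by integration by parts in the proof of Proposition~\ref{thm:AMI}, namely $\tfrac12(1-H)\kappa(H)$. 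This gives ${\mathscr M}(H)\leqslant\tfrac12\kappa(H)={\mathscr U}_1(H)$. (As a consistency check, at $H=\tfrac12$ both $X$ and $Y$ are a Brownian motion with unit negative drift, $\lambda(u,\tfrac12)^{-1}=e^{-2u}$, and the chain of inequalities collapses to the exact value ${\mathscr M}(\tfrac12)=\tfrac12$.)

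The step that requires genuine care is the Slepian comparison: one must verify its hypotheses (equal means, equal variances, pointwise covariance domination) and rely on the distributional --- not merely the expectation --- form of the inequality, together with the standard passage from finite index sets to the all-time supremum. This is also the point at which $H\geqslant\tfrac12$ is indispensable: for $H<\tfrac12$ the map $x\mapsto x^{2H}$ is subadditive, the covariance comparison reverses, and \eqref{eq:DMR_ineq} itself fails. An essentially equivalent but slightly smoother variant sidesteps the distributional Slepian inequality: since the increment variances $\Var(B_H(t)-B_H(s))=|t-s|^{2H}$ are dominated by $\Var(B(t^{2H})-B(s^{2H}))=|t^{2H}-s^{2H}|$ for $H\geqslant\tfrac12$ (again by the same superadditivity) while both drifted processes share the deterministic mean $-t$, the Sudakov--Fernique inequality gives directly ${\mathscr M}(H)=\mathbb{E}\sup_{t\geqslant 0}\{B_H(t)-t\}\leqslant\mathbb{E}\sup_{t\geqslant 0}\{B(t^{2H})-t\}$, after which the same integral computation applies.
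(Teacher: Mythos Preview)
Your proof is correct and follows essentially the same route as the paper: Slepian's inequality to pass from $B_H(t)-t$ to $B(t^{2H})-t$, then \eqref{eq:DMR_ineq} to bound the tail by $\lambda(u,H)^{-1}$, and finally the same Gaussian integral evaluated as $\tfrac12\kappa(H)$. The only differences are cosmetic: you spell out the covariance comparison (which the paper leaves implicit), and you reduce the final integral to the integration-by-parts step of Proposition~\ref{thm:AMI} rather than recognizing it directly as $\tfrac12\,\mathbb{E}|\mathscr N|^{1/(1-H)}$; your closing Sudakov--Fernique remark is a valid alternative for the comparison step but still relies on \eqref{eq:DMR_ineq} to finish, so it does not shorten the argument.
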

\begin{proof}
Informally, Slepian's lemma compares the tail probabilities pertaining to the suprema of
two Gaussian processes in the
situation that the corresponding variance and mean functions are equal and the variograms are ordered;
see for the precise statement e.g.\ \cite{ADL}.
In the case of $H\in[ \frac{1}{2},1)$, applying Slepian's lemma yields that
\begin{eqnarray}
\p\left(\sup_{t\geqslant0} \{B_H(t) - t \}> u\right)
\leqslant
\p\left(\sup_{t\geqslant 0} \{B(t^{2H}) - t \}> u\right),\label{slep}
\end{eqnarray}
for all $u\in \R$.

Using this bound in the first inequality, and Eqn.\ \eqref{eq:DMR_ineq} from Lemma~\ref{thm:krzysD} in the second inequality, we obtain
\begin{align*}
{\mathscr M}(H)& = \int_0^\infty \p\left(\sup_{t\geqslant0} \{B_H(t) - t \}> u\right){\rm d}u
\leqslant
\int_0^\infty \p\left(\sup_{t\geqslant 0} \{B(t^{2H}) - t \}> u\right)\,{\rm d}u \\
& \leqslant \int_0^\infty\int_0^\infty \frac{1}{\sqrt{2\pi t^{2H}}}\exp\left(-\frac{(t+u)^2}{2t^{2H}}\right){\rm d}u\, {\rm d}t = \int_0^\infty \p(B(t^{2H}) > t)\,{\rm d}t \\
& = \tfrac{1}{2}\int_0^\infty \p(|\mathscr N| > t^{1-H})\,{\rm d}t= \tfrac{1}{2}\int_0^\infty \p(|\mathscr N|^{\frac{1}{1-H}} > t)\,{\rm d}t,
\end{align*}
which equals $\tfrac{1}{2}\cdot \kappa(H).$
\end{proof}

\section{Subdiffusive regime}\label{sec:SUB}
This section covers the case that $H\in(0,\frac{1}{2}]$.
Section \ref{sec:CB}  provides various bounds that allow (semi-)closed-form expressions. Then in Section \ref{sec:NT} we develop a numerical procedure that improves the upper bound, and in addition we present bounds on various quantities featuring in Section \ref{sec:CB}.

\subsection{Closed-form bounds}\label{sec:CB} We start by a theorem that is the immediate counterpart of Proposition \ref{thm:UB_K2001}. Due to the nature of Slepian's inequality, for $H\in(0,\frac{1}{2}]$ it  constitutes a lower bound, though. As its proof is essentially analogous to that of Proposition \ref{thm:UB_K2001}, we only provide the main steps.

\begin{proposition}\label{thm:LB_K2001}
For $H\in(0,\frac{1}{2})$ we have ${\mathscr M}(H)\geqslant {\mathscr L}_2(H)$, where
\begin{equation}
{\mathscr L}_2(H) := (1-H)\cdot \kappa(H).
\end{equation}
\end{proposition}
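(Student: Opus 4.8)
The plan is to mirror the proof of Proposition \ref{thm:UB_K2001}, reversing the direction of Slepian's inequality and using the left-hand (lower) bound from Lemma \ref{thm:krzysD} in place of the tighter upper bound \eqref{eq:DMR_ineq}. First I would note that for $H\in(0,\tfrac12)$ the variogram ordering is reversed relative to the superdiffusive case: the increments of $B_H$ are negatively correlated, so Slepian's lemma now gives
\[
\p\left(\sup_{t\geqslant 0}\{B_H(t)-t\}>u\right)\geqslant \p\left(\sup_{t\geqslant 0}\{B(t^{2H})-t\}>u\right)
\]
for all $u\in\R$, with $B\equiv B_{1/2}$ a standard Brownian motion. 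I would want to double-check the precise hypotheses of Slepian's lemma here — that the two processes have the same variance function $t\mapsto t^{2H}$ and mean function (identically $-t$), and that the variogram of the time-changed Brownian motion dominates that of $B_H$ when $H<\tfrac12$ — but this is exactly the companion of \eqref{slep} and is by now standard.

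Next I would integrate this inequality over $u\in(0,\infty)$, using the layer-cake (tail integral) representation ${\mathscr M}(H)=\int_0^\infty \p(\sup_{t\geqslant0}\{B_H(t)-t\}>u)\,{\rm d}u$, to obtain
\[
{\mathscr M}(H)\geqslant \int_0^\infty \p\left(\sup_{t\geqslant 0}\{B(t^{2H})-t\}>u\right){\rm d}u.
\]
Then I would apply the \emph{left} inequality of \eqref{eq:KrzysD} in Lemma \ref{thm:krzysD}, namely $\p(\sup_{t\geqslant0}\{B(t^{2H})-t\}>u)\geqslant (2-2H)/\lambda(u,H)$, and recall that $1/\lambda(u,H)=\int_0^\infty (2\pi t^{2H})^{-1/2}\exp(-(t+u)^2/(2t^{2H}))\,{\rm d}t$. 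This gives
\[
{\mathscr M}(H)\geqslant (2-2H)\int_0^\infty\int_0^\infty \frac{1}{\sqrt{2\pi t^{2H}}}\exp\left(-\frac{(t+u)^2}{2t^{2H}}\right){\rm d}t\,{\rm d}u.
\]

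Finally I would evaluate the double integral exactly as in the proof of Proposition \ref{thm:UB_K2001}: swapping the order of integration (Tonelli, the integrand being nonnegative), the inner $u$-integral is $\p(B(t^{2H})>t)=\p(|{\mathscr N}|>t^{1-H})/1$ up to the factor $\tfrac12$ coming from $\p(B(t^{2H})>t)=\tfrac12\p(|{\mathscr N}|>t^{1-H})$, and then $\int_0^\infty \tfrac12\p(|{\mathscr N}|>t^{1-H})\,{\rm d}t=\tfrac12\int_0^\infty\p(|{\mathscr N}|^{1/(1-H)}>t)\,{\rm d}t=\tfrac12\kappa(H)$. Multiplying by the prefactor $2-2H$ yields ${\mathscr M}(H)\geqslant (1-H)\kappa(H)={\mathscr L}_2(H)$, as claimed. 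Since the paper explicitly says the proof is "essentially analogous" and asks only for the main steps, the only genuine point requiring care is the correct reversal of Slepian's inequality for $H<\tfrac12$ (including checking that the comparison is with $B(t^{2H})$ and not some other process); the rest is the same computation as before with $1$ replaced by $2-2H$ in Lemma \ref{thm:krzysD}, which is exactly where the constant $(1-H)$ rather than $\tfrac12$ in ${\mathscr L}_2(H)$ originates.
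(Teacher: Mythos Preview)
Your proposal is correct and follows essentially the same route as the paper: reverse Slepian's inequality for $H<\tfrac12$ (same comparison process $B(t^{2H})$), invoke the lower bound $2-2H$ from Lemma~\ref{thm:krzysD} in place of the upper bound~\eqref{eq:DMR_ineq}, and evaluate the resulting double integral exactly as in Proposition~\ref{thm:UB_K2001} to get $(1-H)\kappa(H)$. Your added remarks on Tonelli and on verifying the variogram ordering are sound elaborations of steps the paper leaves implicit.
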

\begin{proof}
For $H\geqslant \frac{1}{2}$, Slepian's lemma yields
\[\int_0^\infty \p\left(\sup_{t\geqslant0} \{B_H(t) - t \}> u\right){\rm d}u \geqslant \int_0^\infty \p\left(\sup_{t\geqslant 0} \{B(t^{2H}) - t \}> u\right)\,{\rm d}u,\]
which majorizes, by the lower bound in \eqref{eq:KrzysD},
\[(2-2H)\int_0^\infty\int_0^\infty \frac{1}{\sqrt{2\pi t^{2H}}}\exp\left(-\frac{(t+u)^2}{2t^{2H}}\right){\rm d}u\, {\rm d}t,\]
equalling $(1-H)\cdot \kappa(H).$
\end{proof}

We now present a second lower bound, ${\mathscr L}_3(H)$.
It is noted that this ${\mathscr L}_3(H)$ provides a lower bound on ${\mathscr M}(H)$ for {\it all} $H\in(0,1)$,
but for $H\in(\tfrac{1}{2},1)$, it performs worse than the bound ${\mathscr L}_1(H)$ from Proposition~\ref{thm:LB_K2001}.
In this lower bound the object \begin{equation}
\label{defmu}\mu(H):=\e\left[\left(\sup_{t\in[0,1]} B_H(t)\right)^{\frac{1}{1-H}}\right]\end{equation}
plays a key role.
This quantity will be analyzed in greater detail in Section \ref{sec:NT}:
while we lack a closed-form expression for $\mu(H)$, we derive upper and lower bounds.
From Lemma \ref{lem:a-moment_borel} and Corollary \ref{cor:sudakov_Hlow_supXt},
which will be stated and proven in Section \ref{sec:NT}, we conclude that
\[\underline\mu(H) \leqslant \mu(H) \leqslant \overline\mu(H),\]
with
\begin{equation}\label{eq:P1P2}
\underline\mu(H) := \left(\frac{C^-}{\sqrt{ H}}\right)^{\frac{1}{1-H}}, \:\:\overline\mu(H) :=  \left(\overline\mu^\circ(H,1)\right)^{\frac{1}{1-H}} + \textcolor{black}{\frac{\sqrt{\pi/2}}{1-H}\left((\overline\mu^\circ(H,1))^{\varsigma(H)}+ \e|\mathscr N|^{\varsigma(H)}\right),}
\end{equation}
where {$\mathscr N$} denotes a standard normal  random variable, \textcolor{black}{$\varsigma(H):=H/(1-H)$, and} $C^-$ and $\overline\mu^\circ(\cdot, 1)$ are introduced in Lemma \ref{lem:a-moment_borel} and Corollary
\ref{cor:sudakov_Hlow_supXt}. \textcolor{black}{It is noted that $\e|\mathscr N|^{\varsigma(H)}$ can be expressed in terms of the Gamma function, similarly to how this was done in Lemma~\ref{lem:kap}.}

\begin{proposition}\label{thm:LB_CJB}
For $H\in(0,\frac{1}{2}]$ we have ${\mathscr M}(H)\geqslant {\mathscr L}_3(H)$, where
\begin{equation}\label{eq:LB_CJB}
 {\mathscr L}_3(H) := \nu(H)^{\frac{1}{1-H}} \cdot \underline\mu(H).
\end{equation}
\end{proposition}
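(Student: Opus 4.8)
The plan is to establish the (slightly stronger) bound $\mathscr M(H)\geqslant\nu(H)^{\frac{1}{1-H}}\mu(H)$, which is in fact valid for \emph{every} $H\in(0,1)$; since $\mu(H)\geqslant\underline\mu(H)$ by Lemma~\ref{lem:a-moment_borel}, this immediately yields \eqref{eq:LB_CJB}. The mechanism is to lower-bound the tail $\p(\sup_{t\geqslant0}\{B_H(t)-t\}>u)$ by a tail of $\sup_{t\in[0,1]}B_H(t)$ through a scaling argument with a single free parameter, optimize that parameter, and then integrate in $u$.

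Concretely, I would fix $u>0$ and an auxiliary scale $c>0$ and use the pathwise inclusion of events
\[\Big\{\sup_{t\geqslant0}\{B_H(t)-t\}>u\Big\}\supseteq\Big\{\sup_{t\in[0,c]}B_H(t)>u+c\Big\},\]
which holds because on the right-hand event some $t\in[0,c]$ satisfies $B_H(t)>u+c\geqslant u+t$. By self-similarity of fBm, $\sup_{t\in[0,c]}B_H(t)\eqd c^{H}\sup_{t\in[0,1]}B_H(t)$, so the probability of the right-hand event equals $\p\big(\sup_{t\in[0,1]}B_H(t)>(u+c)c^{-H}\big)$. Minimizing $(u+c)c^{-H}$ over $c>0$ by elementary calculus gives the minimizer $c_\star=Hu/(1-H)$ with minimal value $(u+c_\star)c_\star^{-H}=u^{1-H}/\nu(H)$, so that for every $u>0$,
\[\p\Big(\sup_{t\geqslant0}\{B_H(t)-t\}>u\Big)\geqslant\p\Big(\sup_{t\in[0,1]}B_H(t)>u^{1-H}/\nu(H)\Big).\]

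Finally I would integrate this in $u$, using $\mathscr M(H)=\int_0^\infty\p(\sup_{t\geqslant0}\{B_H(t)-t\}>u)\,{\rm d}u$, and substitute $v=u^{1-H}/\nu(H)$, i.e.\ $u=\nu(H)^{\frac{1}{1-H}}v^{\frac{1}{1-H}}$ and ${\rm d}u=\tfrac{1}{1-H}\nu(H)^{\frac{1}{1-H}}v^{\frac{H}{1-H}}\,{\rm d}v$. Using $\e[X^p]=p\int_0^\infty s^{p-1}\p(X>s)\,{\rm d}s$ for $X\geqslant0$ with $X=\sup_{t\in[0,1]}B_H(t)$ and $p=\tfrac{1}{1-H}$, one has $\int_0^\infty v^{\frac{H}{1-H}}\p(\sup_{t\in[0,1]}B_H(t)>v)\,{\rm d}v=(1-H)\mu(H)$; the two factors $(1-H)$ then cancel and I obtain $\mathscr M(H)\geqslant\nu(H)^{\frac{1}{1-H}}\mu(H)\geqslant\nu(H)^{\frac{1}{1-H}}\underline\mu(H)=\mathscr L_3(H)$.

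None of the steps is genuinely hard; the points to watch are that $c$ must be chosen deterministically (depending on $u$ only), so that the self-similarity identity applies verbatim inside the probability, and that one bounds the tail \emph{before} integrating — this is precisely what makes the $\tfrac{1}{1-H}$-th moment $\mu(H)$ appear. Taking the expectation too early would instead leave only the weaker quantity $(\e\sup_{t\in[0,1]}B_H(t))^{1/(1-H)}\leqslant\mu(H)$.
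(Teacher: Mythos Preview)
Your proposal is correct and follows essentially the same route as the paper: restrict the supremum to a finite window, apply self-similarity to reduce to $\sup_{t\in[0,1]}B_H(t)$, optimize the window length (your $c_\star=Hu/(1-H)$ corresponds exactly to the paper's optimal $T=H/(1-H)$, with the same resulting factor $\nu(H)$), and then recognize the $\tfrac{1}{1-H}$-th moment $\mu(H)$. The only cosmetic difference is that the paper first rewrites $\mathscr M(H)=\e\big[(\sup_{t\geqslant0}B_H(t)/(1+t))^{1/(1-H)}\big]$ and bounds inside the expectation, whereas you bound the tail for each $u$ and integrate; both arrive at the identical intermediate inequality $\mathscr M(H)\geqslant\nu(H)^{1/(1-H)}\mu(H)$.
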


\begin{proof}
Using the time-scaling property of fBm we obtain that ${\mathscr M}(H)$ equals
\[  \int_0^\infty \p\left(\sup_{t\geqslant 0}\{B_H(t) - t\} > u\right){\rm d}u = \int_0^\infty \p\left(\sup_{t\geqslant0}\frac{B_H(t)}{1+t} > u^{1-H}\right){\rm d}u  = \e \left[\left(\sup_{t\geqslant 0}\frac{B_H(t)}{1+t}\right)^{\frac{1}{1-H}}\right].\]
The last expression in the previous display obviously majorizes, for any $T>0$,
\[\e\left[ \left(\sup_{t\in[0,T]}\frac{B_H(t)}{1+t}\right)^{\frac{1}{1-H}}\right].\]
Again using the time-scaling property of fBm, we see that
\begin{align*}
\e\left[\left( \sup_{t\in[0,T]}\frac{B_H(t)}{1+t}\right)^{\frac{1}{1-H}}\right] \geqslant \left(\frac{T^H}{1+T}\right)^{\frac{1}{1-H}} \cdot\e\left[\left( \sup_{t\in[0,1]}B_H(t)\right)^{\frac{1}{1-H}}\right].
\end{align*}
For a given $H$, the maximum of a function $T\mapsto T^H/(1+T)$ is attained at $T = H/(1-H)$ and equals $\nu(H)$, which in combination with \eqref{eq:P1P2} yields the bound in \eqref{eq:LB_CJB}.
\end{proof}

Numerical experiments show that  ${\mathscr L}_3(H)$ is the tightest of the two lower bounds for $H$ close to 0, whereas ${\mathscr L}_2(H)$ is the tightest for larger values of $H$.

The next objective is to derive an upper bound. In this result, we extensively use the quantity
\[\psi(T,H):=\min\left\{T\cdot \frac{1-2H}{2H}, 1\right\},\]
which is non-negative for $H\in(0,\frac{1}{2}]$. After the proof of the following proposition, we will comment on the computation of the quantity $\omega(H)$ that features in this upper bound. We recall that $\overline\mu(H)$ was defined in \eqref{eq:P1P2}.

\begin{proposition}\label{thm:UB_Cinv}
For $H\in(0,\frac{1}{2}]$ we have ${\mathscr M}(H)\leqslant {\mathscr U}_2(H)$, where
\begin{equation}\label{eq:UB_Cinv}
{\mathscr U}_2(H) := \omega(H) \cdot \overline\mu(H),
\end{equation}
and
\begin{align}\label{eq:mH}
\omega(H) :=  \inf_{T>0} \left\{T^{\frac{H}{1-H}} + \left(\frac{\psi(T,H)^{1-2H}\, T^H}{\psi(T,H)+T} \right)^{\frac{1}{1-H}} \right\}.
\end{align}

In addition, $\omega(H) \leqslant \min\{\omega_1(H), \omega_2(H)\}$, where
\begin{align}\label{eq:m2m3}
\omega_1(H) := 2\cdot (2H)^{\frac{H}{1-H}}(1-2H)^{\frac{1-2H}{2-2H}}, \quad \quad \omega_2(H) := \frac{1}{\nu(H)}.
\end{align}
\end{proposition}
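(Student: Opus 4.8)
The plan is to establish the main identity for $\omega(H)$ via a time-truncation argument, and then to verify the two explicit bounds $\omega_1(H)$ and $\omega_2(H)$ by making convenient (sub-optimal) choices of the parameter $T$ in the infimum defining $\omega(H)$. First I would mimic the opening of the proof of Proposition~\ref{thm:LB_CJB}: using the time-scaling property of fBm, write
\[{\mathscr M}(H)=\e\left[\left(\sup_{t\geqslant 0}\frac{B_H(t)}{1+t}\right)^{\frac{1}{1-H}}\right].\]
Now split the supremum over $[0,\infty)$ into the supremum over $[0,T]$ and the supremum over $[T,\infty)$, and bound $(a+b)^{p}$ using subadditivity of $x\mapsto x^{p}$ for $p=\tfrac{1}{1-H}\in(1,2]$ — careful here, since for $p>1$ one does \emph{not} have $(a+b)^p\le a^p+b^p$; instead I expect the argument uses that $\sup_{t\ge0}\{\cdot\}\le \sup_{[0,T]}\{\cdot\}\vee\sup_{[T,\infty)}\{\cdot\}\le \sup_{[0,T]}\{\cdot\}+\sup_{[T,\infty)}\{\cdot\}$ followed by the elementary $\e[(X+Y)^p]^{1/p}\le \e[X^p]^{1/p}+\e[Y^p]^{1/p}$ (Minkowski), or a direct bound splitting the probability integral at a level. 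In either case the first piece contributes $\big(\sup_{[0,T]}B_H(t)/(1+t)\big)$, which by scaling is at most $T^H\sup_{[0,1]}B_H(s)/(1) $ up to the normalizing factor, producing a term of the form $T^{H/(1-H)}\overline\mu(H)$ after taking the $\tfrac{1}{1-H}$-moment and invoking $\mu(H)\le\overline\mu(H)$ from \eqref{eq:P1P2}. The second (tail) piece is where $\psi(T,H)$ enters: on $[T,\infty)$ one uses the Slepian/variogram comparison against Brownian motion (as in Proposition~\ref{thm:UB_K2001}) together with the concavity of $t\mapsto t^{2H}$ to replace $B_H(t^{... })$ by a Brownian term whose effective drift over $[T,\infty)$ is controlled by the local slope of $t^{2H}$ at $t=T$, namely $2H T^{2H-1}$; the quantity $\psi(T,H)=\min\{T(1-2H)/(2H),1\}$ is exactly the threshold making this linearization valid, and the resulting contribution is $\big(\psi(T,H)^{1-2H}T^H/(\psi(T,H)+T)\big)^{1/(1-H)}\cdot\overline\mu(H)$. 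Adding the two pieces and taking the infimum over $T>0$ gives ${\mathscr M}(H)\le \omega(H)\,\overline\mu(H)$, which is \eqref{eq:UB_Cinv}.

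For the bound $\omega(H)\le\omega_2(H)=1/\nu(H)$ I would simply take $T=H/(1-H)$ in \eqref{eq:mH}: this is the maximizer of $T\mapsto T^H/(1+T)$ used already in Proposition~\ref{thm:LB_CJB}, and for this value $\psi(T,H)=\min\{(1-2H)/(2(1-H)),1\}=(1-2H)/(2(1-H))\le 1$; plugging in and simplifying the two summands should collapse (after using $\psi+T=\psi+H/(1-H)$ and the identity $\nu(H)=H^H(1-H)^{1-H}$) to $1/\nu(H)$, possibly after noting that the tail term is dominated or exactly accounted for. For $\omega(H)\le\omega_1(H)=2(2H)^{H/(1-H)}(1-2H)^{(1-2H)/(2-2H)}$ I would instead choose $T$ so that the two bracketed terms in \eqref{eq:mH} balance — equating $T^{H/(1-H)}$ with the tail term forces, on the branch where $\psi(T,H)=T(1-2H)/(2H)<1$ (i.e. $T<2H/(1-2H)$), a specific value of $T$; substituting that $T$ and using $2\cdot(\text{common value})$ should yield exactly $\omega_1(H)$ after elementary algebra. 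I would double-check which branch of $\psi$ is active for the optimizing $T$ and that $T<2H/(1-2H)$ holds.

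The main obstacle, as flagged above, is getting the tail estimate on $[T,\infty)$ right: one must pass from $B_H$ to Brownian motion via the variogram ordering (valid for $H\le\tfrac12$ in the \emph{correct} direction — note the Slepian comparison reverses direction compared with the $H\ge\tfrac12$ case, so one has to be careful that it still furnishes an \emph{upper} bound here, which it does because the relevant inequality $\p(\sup B_H>u)\le \p(\sup B(t^{2H})>u)$ needs the appropriate monotonicity of the variogram increments restricted to $[T,\infty)$), and then linearize $t^{2H}$ around $t=T$ to extract the factor $\psi(T,H)^{1-2H}T^H/(\psi(T,H)+T)$. Checking that $\psi(T,H)$ is precisely the right truncation length for the concavity/linearization step — and that the constant is not lost — is the delicate part; everything else (the scaling identities, Minkowski, the explicit choices of $T$ for $\omega_1,\omega_2$) is routine. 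I would also make sure the final infimum is over the full range $T>0$ and that both explicit choices of $T$ are admissible, so that $\omega(H)\le\min\{\omega_1(H),\omega_2(H)\}$ follows immediately.
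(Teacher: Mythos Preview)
Your overall outline has the right shape but contains a genuine gap in the most delicate step, the tail estimate on $[T,\infty)$.

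\textbf{The tail term does not use Slepian.} You propose to handle $\sup_{t\ge T}B_H(t)/(1+t)$ via a Slepian/variogram comparison against Brownian motion plus a linearization of $t^{2H}$. This cannot work: for $H\le\tfrac12$ the variogram of $B_H$ dominates that of $B(t^{2H})$, so Slepian yields $\p(\sup B_H>u)\geqslant \p(\sup B(t^{2H})>u)$ --- the wrong direction for an upper bound (indeed this is exactly how the paper proves the \emph{lower} bound $\mathscr L_2$). The paper instead uses the \emph{time-inversion identity} $\{B_H(t)\}_{t>0}\stackrel{\rm d}{=}\{t^{2H}B_H(1/t)\}_{t>0}$: substituting $t\mapsto 1/t$ and rescaling converts the tail supremum into
\[
\sup_{t\in[T,\infty)}\frac{B_H(t)}{1+t}\ \stackrel{\rm d}{=}\ \sup_{t\in(0,1]}\frac{t^{1-2H}T^H}{t+T}\,B_H(t),
\]
and then one simply pulls out the deterministic factor $\sup_{t\in(0,1]} t^{1-2H}T^H/(t+T)$. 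The quantity $\psi(T,H)$ is nothing more than the argmax of $t\mapsto t^{1-2H}/(t+T)$ on $[0,1]$; it has nothing to do with a linearization threshold.

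\textbf{The splitting is simpler than you think.} Since $\sup_{t\ge0}=\max\{\sup_{[0,T]},\sup_{[T,\infty)}\}$, the event $\{\sup_{t\ge0}>u\}$ is the union of the two sub-events, and the union bound on the tail probabilities gives directly
\[
\e\Big[\big(\sup_{t\ge0}\big)^{\frac{1}{1-H}}\Big]\leqslant \e\Big[\big(\sup_{[0,T]}\big)^{\frac{1}{1-H}}\Big]+\e\Big[\big(\sup_{[T,\infty)}\big)^{\frac{1}{1-H}}\Big].
\]
No Minkowski is needed.

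\textbf{On $\omega_1,\omega_2$.} Your balancing idea for $\omega_1$ is correct: on the branch $T<2H/(1-2H)$ (so $\psi(T,H)=T(1-2H)/(2H)$) the two summands become $T^{H/(1-H)}$ and $T^{-H/(1-H)}(1-2H)^{(1-2H)/(1-H)}(2H)^{2H/(1-H)}$, and equating them gives the claimed $\omega_1(H)$. Your specific choice $T=H/(1-H)$ for $\omega_2$ is wrong, however. The paper instead bounds $\psi^{1-2H}/(\psi+T)\leqslant 1/T$ (using $\psi\leqslant1$), reducing the bracket to $T^{H/(1-H)}+T^{-1}$, whose minimizer is $T=((1-H)/H)^{1-H}$ and whose minimum is $1/\nu(H)$.
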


\begin{proof}
Our starting point is again
\[{\mathscr M}(H) = \int_0^\infty \p\left(\sup_{t\geqslant0}\frac{B_H(t)}{1+t} > u^{1-H}\right){\rm d}u.\]
The idea is now to split the interval $[0,\infty)$ into $[0,T)$ and $[T,\infty)$, in the sense that the expression in the previous display is majorized by
\begin{align*}
 \int_0^\infty &\p\left(\sup_{t\in[0,T]}\frac{B_H(t)}{1+t} > u^{1-H}\right){\rm d}u +  \int_0^\infty \p\left(\sup_{t\in[T,\infty)}\frac{B_H(t)}{1+t} > u^{1-H}\right){\rm d}u \\
& = \e \left[\left(\sup_{t\in[0,T]}\frac{B_H(t)}{1+t}\right)^{\frac{1}{1-H}}\right] + \e \left[\left(\sup_{t\in[T,\infty)}\frac{B_H(t)}{1+t}\right)^{\frac{1}{1-H}}\right].
\end{align*}
The next step is to consider these two terms separately. We deal with the second term {using self-similarity and the fact that} \[{\big\{B_H(t)\big\}_{t\in\R_+} \stackrel{\rm d}{=} \big\{t^{2H}B_H(1/t)\big\}_{t\in\R_+},}\] with the objective to arrive at an expression similar to the first term. Concentrating on this second term, we thus obtain
\begin{align*}
\sup_{t\in[T,\infty)}\frac{B_H(t)}{1+t} & = \sup_{t\in(0,1/T]}\frac{B_H(1/t)}{1+1/t} \stackrel{\rm d}{=}  \sup_{t\in(0,1/T]}\frac{t^{-2H}B_H(t)}{1+1/t} = \sup_{t\in(0,1]}\frac{(t/T)^{-2H}B_H(t/T)}{1+T/t} \\
& \stackrel{\rm d}{=} \sup_{t\in(0,1]}\frac{t^{-2H}T^H}{1+T/t} \cdot B_H(t) = \sup_{t\in(0,1]}\frac{t^{1-2H}T^H}{t+T} \cdot B_H(t).
\end{align*}
Upon combining the above,
\begin{align}
\nonumber \e&\left[ \left(\sup_{t\in[0,T]}\frac{B_H(t)}{1+t}\right)^{\frac{1}{1-H}}\right] + \e\left[ \left(\sup_{t\in[T,\infty)}\frac{B_H(t)}{1+t}\right)^{\frac{1}{1-H}}\right] \\
\nonumber& = \e \left[\left( \sup_{t\in[0,1]} \frac{T^H}{1+tT} \cdot B_H(t) \right)^{\frac{1}{1-H}}\right]+ \e \left[\left( \sup_{t\in[0,1]} \frac{t^{1-2H}T^H}{t+T} \cdot B_H(t) \right)^{\frac{1}{1-H}}\right] \\
\label{eq:to_dissect}& \leqslant \left( T^{\tfrac{H}{1-H}} + \left(\sup_{t\in[0,1]} \frac{t^{1-2H} T^H}{t+T} \right)^{\frac{1}{1-H}} \right) \cdot \e\left[ \left( \sup_{t\in[0,1]} B_H(t) \right)^{\frac{1}{1-H}}\right].
\end{align}
It takes some elementary calculus to verify that, for given values of  $T$ and $H$, the supremum of the function $t\mapsto {t^{1-2H}}/({t+T})$ over $[0,1]$ is attained at $\psi(T,H)$. This, combined with \eqref{eq:P1P2}, proves that, indeed, \eqref{eq:UB_Cinv} holds with  $\omega(H)$ as defined  in \eqref{eq:mH}.

It is left to show that $\omega(H) \leqslant \min\{\omega_1(H), \omega_2(H)\}$.
The bound $\omega(H) \leqslant \omega_1(H)$ results from taking the infimum in $\omega(H)$ in \eqref{eq:mH}, but over a subinterval of $(0,\infty)$. More concretely, we consider the
interval $(0,\tau(H))$ with $\tau(H):=2H/(1-2H)$, in which we can replace $\psi(T,H)$ by $T(1-2H)/(2H)$. We obtain
\begin{align}\nonumber
\omega(H)& \leqslant\omega_1(H) = \inf_{T \in(0,\tau(H))}\left\{T^{\frac{H}{1-H}} + \left(\frac{\psi(T,H)^{1-2H} T^H}{\psi(T,H)+T} \right)^{\frac{1}{1-H}} \right\}\\
\label{eq:infT}&= \inf_{T \in(0,\tau(H))}\left(T^{\frac{H}{1-H}}+ T^{-\frac{H}{1-H}}(1-2H)^{\frac{1-2H}{1-H}}(2H)^{\frac{2H}{1-H}}\right).
\end{align}
Computing the derivative with respect to $T$ and solving the first-order condition yields that infimum above is attained at \[T = 2H(1-2H)^{\frac{1-2H}{2H}}.\]
It is directly seen that this minimizer does not exceed  ${2H}/({1-2H})$, and hence it is also the minimizer of \eqref{eq:infT}. Further standard algebraic manipulations yield the expression in \eqref{eq:m2m3}.

Further, the bound $\omega(H) \leqslant \omega_2(H)$ results from realizing that $\psi(T,H)\leqslant 1$:
\begin{align*}
\omega(H) \leqslant \omega_2(H) = \inf_{T > 0}\left\{T^{\frac{H}{1-H}} + \left(\frac{T^H}{T} \right)^{\frac{1}{1-H}} \right\}.
\end{align*}
The infimum above is attained at \[T = \left(\frac{1-H}{H}\right)^{1-H}
\] and again, simple algebra then directly leads to the expression $1/\nu(H)$ in \eqref{eq:m2m3}.
\end{proof}

Although we have the two upper bounds from \eqref{eq:m2m3}, it is worthwhile to explore whether we can analyze $\omega(H)$ in a more precise fashion. The next lemma deals with this issue. Here $H_0 \approx 0.1541$ is the unique solution to the equation
\begin{equation}\label{eq:H0}\frac{H}{1-H} = \left(\frac{2-H}{1-H}\right)^{-\frac{2-H}{1-H}},\end{equation}
and  $\tau^\circ(H)$ is the unique solution to the equation, for $\tau\geqslant (1-H)^{-1}$,
\begin{align}\label{eq:taucirc}
\frac{H}{1-H} + \left({\frac{H}{1-H}-\tau}\right){(1+\tau)^{-\frac{2-H}{1-H}}} = 0.
\end{align}

\begin{lemma}\label{rem:UB_Cinv} The function $\omega(H)$, as defined in \eqref{eq:mH}, with $H\in(0,\frac{1}{2}]$, satisfies
\begin{align}
\omega(H) = \begin{cases}
\min\{\omega_0(H), \omega_1(H)\}, &  H\leqslant H_0 \\
\omega_1(H), &  H > H_0,
\end{cases}
\end{align}
where
\begin{align}\label{eq:m1}
\omega_0(H) = \tau^\circ(H)^{\frac{H}{1-H}} \left(1+ \frac{1}{{(1+\tau^\circ(H))^{\frac{1}{1-H}}}}\right).
\end{align}
\end{lemma}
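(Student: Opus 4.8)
The plan is to split the infimum in \eqref{eq:mH} at the point $\tau(H):=2H/(1-2H)$ where $\psi(\cdot,H)$ has its kink. On $(0,\tau(H)]$ we have $\psi(T,H)=T(1-2H)/(2H)$, and exactly as in the proof of Proposition~\ref{thm:UB_Cinv} the bracket in \eqref{eq:mH} equals $g_1(T):=T^{\frac{H}{1-H}}+b\,T^{-\frac{H}{1-H}}$ with $b:=(2H)^{\frac{2H}{1-H}}(1-2H)^{\frac{1-2H}{1-H}}$; this function strictly decreases and then strictly increases (inspect $g_1'$), its unconstrained minimiser $T^\star=2H(1-2H)^{(1-2H)/(2H)}$ lies in $(0,\tau(H)]$, and its minimum equals $\omega_1(H)$; hence $\inf_{(0,\tau(H)]}g_1=\omega_1(H)$ and $g_1(\tau(H))\geqslant\omega_1(H)$. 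On $[\tau(H),\infty)$ we have $\psi(T,H)=1$ and the bracket equals $g_0(T):=T^{\frac{H}{1-H}}+(T^{H}/(1+T))^{\frac{1}{1-H}}$, which tends to $\infty$ as $T\to\infty$. The objective in \eqref{eq:mH} is continuous on $(0,\infty)$, so $g_0(\tau(H))=g_1(\tau(H))$ and
\[
\omega(H)=\min\Big\{\,\omega_1(H),\ \inf_{T\geqslant\tau(H)}g_0(T)\,\Big\}.
\]
Thus the lemma reduces to minimising $g_0$ over $[\tau(H),\infty)$.

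Set $p:=H/(1-H)$ and $q:=1/(1-H)$. Using the identities $Hq=p$, $p+1=q$, $p(1-H)=H$, the second term of $g_0$ simplifies to $T^{p}(1+T)^{-q}$, so $g_0(T)=T^{p}\big(1+(1+T)^{-q}\big)$, and a direct differentiation shows that $g_0'(T)$ has the same sign as
\[
\phi(T):=H(1+T)^{\frac{2-H}{1-H}}+H-(1-H)T,
\]
with $\phi(T)=0$ being exactly equation \eqref{eq:taucirc}. In particular, at any critical point $\tau^\circ(H)$ of $g_0$ the simplified form of $g_0$ gives $g_0(\tau^\circ(H))=\omega_0(H)$ as in \eqref{eq:m1}, with no further appeal to \eqref{eq:taucirc} needed. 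The function $\phi$ is strictly convex on $(0,\infty)$ (one checks $\phi''>0$), with $\phi(0)=2H>0$ and $\phi(T)\to\infty$; hence $\phi$ has either no zero — if $\min_{T\geqslant0}\phi>0$ — or exactly two zeros $\tau_-\leqslant\tau_+$, with $\phi<0$ on $(\tau_-,\tau_+)$ and $\phi>0$ off $[\tau_-,\tau_+]$. The threshold $H_0$ appears through the elementary equivalence $\min_{T\geqslant0}\phi(T)\leqslant0\Longleftrightarrow H\leqslant H_0$: when $\phi'(0)<0$, i.e.\ $H<1-\tfrac1{\sqrt2}$, the minimiser is $T_1=\big((1-H)^2/(H(2-H))\big)^{1-H}-1$ and a short computation gives $\phi(T_1)=1-\tfrac{1-H}{2-H}\big((1-H)^2/(H(2-H))\big)^{1-H}$, whose sign changes precisely at the root of \eqref{eq:H0} because $H\mapsto\tfrac{H}{1-H}$ is increasing while $H\mapsto\big(\tfrac{2-H}{1-H}\big)^{-\frac{2-H}{1-H}}$ is decreasing; when $\phi'(0)\geqslant0$, i.e.\ $H\geqslant1-\tfrac1{\sqrt2}>H_0$, $\phi$ is increasing so $\min\phi=\phi(0)>0$. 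The same monotonicity gives $\phi\big(\tfrac1{1-H}\big)\leqslant0\Longleftrightarrow H\leqslant H_0$, and one records two auxiliary facts: $\phi(\tau(H))=\tfrac{H}{1-2H}\big((1-2H)^{-\frac1{1-H}}-1\big)>0$ for all $H\in(0,\tfrac12)$, and $\tau(H)\leqslant\tfrac1{1-H}$ whenever $2H^2-4H+1\geqslant0$, in particular whenever $H\leqslant H_0$.

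These pieces assemble as follows. If $H>H_0$ then $\phi>0$ on $(0,\infty)$, so $g_0$ is strictly increasing on $[\tau(H),\infty)$ and $\inf_{T\geqslant\tau(H)}g_0(T)=g_0(\tau(H))=g_1(\tau(H))\geqslant\omega_1(H)$, whence $\omega(H)=\omega_1(H)$. If $H\leqslant H_0$ then $\phi\big(\tfrac1{1-H}\big)\leqslant0$ together with convexity forces $\tau_-\leqslant\tfrac1{1-H}\leqslant\tau_+$, so the unique zero of $\phi$ in $[(1-H)^{-1},\infty)$ — the one singled out by the constraint in \eqref{eq:taucirc} — is $\tau^\circ(H)=\tau_+$; combining $\tau(H)\leqslant\tfrac1{1-H}\leqslant\tau^\circ(H)$ with $\phi(\tau(H))>0$ yields $\tau(H)<\tau_-$, so on $[\tau(H),\infty)$ the function $g_0$ increases, then decreases on $[\tau_-,\tau^\circ(H)]$, then increases, and hence $\inf_{T\geqslant\tau(H)}g_0(T)=\min\{g_0(\tau(H)),g_0(\tau^\circ(H))\}=\min\{g_1(\tau(H)),\omega_0(H)\}$. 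Since $g_1(\tau(H))\geqslant\omega_1(H)$, this gives $\omega(H)=\min\{\omega_1(H),g_1(\tau(H)),\omega_0(H)\}=\min\{\omega_1(H),\omega_0(H)\}$. The degenerate endpoint $H=\tfrac12$, where $\psi(\cdot,H)\equiv0$, is recovered by continuity (or directly: $\omega(\tfrac12)=\inf_{T>0}\{T+T^{-1}\}=2=\omega_1(\tfrac12)$).

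The main obstacle is the location argument just used: showing that the critical point picked out by the constraint $\tau\geqslant(1-H)^{-1}$ in \eqref{eq:taucirc} always lies in the feasible half-line $[\tau(H),\infty)$, and that an interior minimiser of $g_0$ capable of lowering $\omega(H)$ below $\omega_1(H)$ appears exactly when $H\leqslant H_0$. This rests entirely on the two comparisons $\phi\big(\tfrac1{1-H}\big)\leqslant0\Longleftrightarrow H\leqslant H_0$ and $\tau(H)\leqslant\tfrac1{1-H}$ for $H\leqslant H_0$; each reduces to an elementary monotonicity statement, but making the fractional exponents $\tfrac{H}{1-H},\tfrac1{1-H},\tfrac{2-H}{1-H}$ line up — and in particular checking that $\min_{T\geqslant0}\phi$ and $\phi((1-H)^{-1})$ change sign at the \emph{same} $H_0$ — is the delicate bookkeeping. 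Everything else (the derivative computation, the convexity of $\phi$, the collapse $g_0(\tau^\circ)=\omega_0$) is routine once $Hq=p$ and $p+1=q$ are invoked.
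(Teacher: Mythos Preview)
Your proof is correct and follows essentially the same approach as the paper: split the infimum at $\tau(H)=2H/(1-2H)$, obtain $\omega_1(H)$ as the minimum on $(0,\tau(H)]$, and on $[\tau(H),\infty)$ analyze the sign of the derivative via a function with at most two zeros, so that $\tau^\circ(H)$ emerges as the relevant local minimiser precisely when $H\leqslant H_0$. The only minor variation is that you work with the convex function $\phi(T)$ while the paper uses the unimodal $f(T)=\alpha+(\alpha-T)(1+T)^{-(\alpha+2)}$; since $\phi(T)=(1-H)(1+T)^{\alpha+2}f(T)$ they have the same zeros, and your explicit checks $\phi(\tau(H))>0$ and $\tau(H)\leqslant(1-H)^{-1}$ for $H\leqslant H_0$ make the location argument slightly tighter than the paper's case analysis.
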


\begin{proof}
Above we already  considered the infimum in \eqref{eq:mH}, but with the minimization performed only over $T<\tau(H):=2H/(1-2H).$ This resulted in the expression for $\omega_1(H)$ as given in \eqref{eq:m2m3}.

We continue by considering the infimum in \eqref{eq:mH}, but now with the minimization performed only over $T\geqslant \tau(H).$ To this end, we define the functions
\[F(T) := T^\a\left(1 + \frac{1}{(1+T)^{\a+1}}\right),\hspace{1cm}f(T) := \a + \frac{\a-T}{(1+T)^{\a+2}}.\]
Hence, for a given value of $H\in(0,\frac{1}{2}]$,  the infimum we are looking for is $\inf_T F(T)$, with $T>\tau(H)$ and $\alpha=H(1-H)^{-1}$. It is directly seen that $F'(T) = T^{\a-1} f(T)$, so that the
first-order condition reduces to $f(T)=0.$ We also have that $f(0) = 2\a$, $\lim_{T\to\infty}f(T) = \a$, and
\begin{align*}
f'(T) = \frac{1+\a}{(1+T)^{3+\a}} \cdot (T - (1+\a)).
\end{align*}
This  means that the function $f(\cdot)$ is strictly decreasing on $T\in[0,1+\a)$ and strictly increasing on $(1+\a,\infty)$, i.e., it attains its minimum at $T = 1+\a$. As a consequence, the function $f(\cdot)$ has {\it at most two zeros}. We distinguish between  three cases.
\begin{itemize}
\item[(1)] Suppose $f(1+\a) > 0$.  In this case the equation $f(T) = 0$ does not have any positive solutions and thus $F'(T) > 0$ for all $T$, meaning that the infimum of function $F(\cdot)$ over $T>\tau(H)$ is attained at $T=\tau(H)$.
\item[(2)] Suppose $f(1+\a) = 0$. \ Then the equation $f(T)=0$ has exactly one solution, namely $T = 1+\a = ({1-H})^{-1}$. This means that the infimum of $F(\cdot)$ over $T>\tau(H)$ is attained at $\max\{\tau(H), ({1-H})^{-1}\}$.
\item[(3)] Suppose $f(1+\a) < 0$. \ Then the equation $f(T)=0$ has at most two solutions, say $T_1(H)$ and  $T_2(H)$, where {$T_1(H) < 1+\a < T_2(H)$}. The infimum of $F(\cdot)$ over $T>\tau(H)$ is then attained at $\max\{\tau(H), T_2(H)\}$. Finally, since $\tau(H) \leqslant 1+\a$ for $H\in(0,1-\frac{1}{2}\sqrt{2}]$, we remark that the maximum is attained at $T_2(H)$ as long as $H$ belongs to that interval. Observe that $T_2(H)$ solves \eqref{eq:taucirc}, so that we can identify it with $\tau^\circ(H)$.
\end{itemize}
Now that we have analyzed the minimum over $T<\tau(H)$ and $T\geqslant \tau(H)$, we have to pick the smallest of these numbers.
Across all $H\in(0,\frac{1}{2}]$ we have that $F(\tau(H)) \geqslant \omega_1(H)$, because
\[\frac{F(\tau(H))}{\omega_1(H)} = \tfrac{1}{2} \cdot \left(\beta + \frac{1}{\beta}\right),\]
where $\beta = (1-2H)^{1/(2-2H)}$, in combination with the known equality
$\beta + \beta^{-1} \geqslant 2$, for any $\beta>0.$
It thus suffices to compare $\omega_1(H)$ with the value of function $F(\cdot)$ at $\tau^\circ(H)$.
Observing that $f(1+\a)=0$ coincides with \eqref{eq:H0}, we obtain the desired result.
\end{proof}

\subsection{Numerical techniques for improved  bounds}\label{sec:NT} We start by stating and proving an upper and lower bound on the function $\mu(\cdot)$. These are the functions $\overline\mu(\cdot)$ and $\underline\mu(\cdot)$, which were given in \eqref{eq:P1P2} and appeared in Propositions \ref{thm:LB_CJB} and \ref{thm:UB_Cinv}. Then we focus on developing numerical procedures to find a tighter upper bound on ${\mathscr M}(H).$
We do so by studying the object
\[\mu(H,\alpha):=\e\left[\left(\sup_{t\in[0,1]} B_H(t)\right)^{\alpha}\,\right],\]
where we note that $\mu(H, (1-H)^{-1})=\mu(H)$, with $\mu(\cdot)$ as defined in \eqref{defmu}.

The next lemma presents (i)~bounds on $\mu(H,\a)$ in terms of $\mu(H,1)$, and (ii)~explicit bounds on
$\mu(H,1)$. {With $\lceil x\rceil$ denoting the smallest integer larger than or equal to $x$}, we note that $2/\log_2 \lceil 2^{2/H}\rceil = H$ when $2^{2/H}$ is an integer.

\begin{lemma}\label{lem:a-moment_borel}
For any $H\in(0,1)$ and $\a>1$,
\begin{equation}\label{eq:P1}
(\mu(H,1))^{\a} \leqslant \mu(H,\a) \leqslant \textcolor{black}{(\mu(H,1))^\alpha + \max\{1,2^{\a-2}\}\a\sqrt{\frac{\pi}{2}}\left((\mu(H,1))^{\alpha-1} + \e|\mathscr N|^{\a-1}\right).}
\end{equation}
For $H \in(0,\frac{1}{2}]$,
\begin{equation}\label{eq:P2}
\frac{C^-}{\sqrt{ H}} \leqslant \mu(H,1)  \leqslant \overline\mu(H,1) := \frac{C^+}{\sqrt{2/\log_2 \lceil 2^{2/H}\rceil}},
\end{equation}
where  $C^-: = ({2\sqrt{\pi e \log 2}})^{-1} \approx 0.2055$ and $C^+: = 1.695$.
\end{lemma}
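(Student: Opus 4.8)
The plan is to establish the four inequalities separately, as they rely on rather different ideas. I begin with \eqref{eq:P1}. For the lower bound $(\mu(H,1))^\a \leqslant \mu(H,\a)$, the supremum $S:=\sup_{t\in[0,1]}B_H(t)$ is nonnegative (since $B_H(0)=0$), so $x\mapsto x^\a$ is convex on the range of $S$, and Jensen's inequality gives $(\e S)^\a \leqslant \e[S^\a]$. For the upper bound I would write, using $x^\a \leqslant (\e S)^\a + \a\, x^{\a-1}\,|x-\e S|$ or a similar elementary pointwise inequality valid for $x\geqslant 0$ (splitting on whether $x \leqslant \e S$ or not, and using convexity of $x\mapsto x^\a$ together with a Taylor-type bound), and then take expectations. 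The term $\e[S^{\a-1}|S-\e S|]$ is handled by Cauchy--Schwarz or by bounding $S^{\a-1}$ crudely; the factor $\max\{1,2^{\a-2}\}$ and the $\sqrt{\pi/2}$ strongly suggest that one replaces the centred deviation $\e|S-\e S|$ by a Gaussian concentration estimate: by the Borell--TIS inequality the deviation $S - \e S$ (or $S-\mathrm{median}$) is sub-Gaussian with variance proxy $\sigma^2 = \sup_{t\in[0,1]}\Var B_H(t) = 1$, so that $\e|S-\e S|^p \leqslant \e|\mathscr N|^p$ for $p\geqslant 1$, and the convexity inequality $(a+b)^{\a-1}\leqslant \max\{1,2^{\a-2}\}(a^{\a-1}+b^{\a-1})$ produces the two summands $(\mu(H,1))^{\a-1}$ and $\e|\mathscr N|^{\a-1}$. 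The bookkeeping here — getting exactly the constant $\max\{1,2^{\a-2}\}\a\sqrt{\pi/2}$ — is the fiddliest part of this half, but it is routine once the Borell--TIS bound is in hand.

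For \eqref{eq:P2} I would treat the lower and upper bounds on $\mu(H,1)=\e[S]$ separately. The lower bound $C^-/\sqrt H$ is a Sudakov-type minoration: one picks a suitable finite subset of $[0,1]$ — the natural choice is the dyadic grid $\{k\,2^{-n}\}$ for an appropriately chosen $n\approx 2/H$ — on which the increments of $B_H$ are, by stationarity and $\Var(B_H(t)-B_H(s))=|t-s|^{2H}$, comparable to independent mean-zero Gaussians of variance $2^{-2Hn}$; the number of points being $\approx 2^n$, Sudakov's inequality gives $\e[\max]\gtrsim \sqrt{\log 2^n}\cdot 2^{-Hn}\approx \sqrt{n\log 2}\cdot 2^{-Hn}$, which optimised over $n$ of order $1/H$ yields the $1/\sqrt H$ scaling together with the explicit constant $(2\sqrt{\pi e\log 2})^{-1}$ — the $e$ and $\log 2$ being artefacts of the optimisation $\max_n \sqrt{n}\,2^{-Hn}$ and the $\sqrt\pi$ coming from the Sudakov constant. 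The upper bound $\overline\mu(H,1)=C^+/\sqrt{2/\log_2\lceil 2^{2/H}\rceil}$ goes the other way: one reduces $\sup_{t\in[0,1]}B_H(t)$ to a maximum over $N=\lceil 2^{2/H}\rceil$ suitably correlated Gaussians and applies a chaining / Dudley-type estimate, or more likely invokes a known sharp bound (the form $C^+/\sqrt{2/\log_2 N}$, i.e.\ $C^+\sqrt{(\log_2 N)/2}$, is exactly the expected-maximum-of-$N$-Gaussians scaling $\sqrt{\tfrac12\log_2 N}$ up to the constant $C^+=1.695$), after checking that the grid of size $N$ genuinely dominates the continuous supremum. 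The remark preceding the lemma — that $2/\log_2\lceil 2^{2/H}\rceil = H$ when $2^{2/H}\in\mathbb N$ — flags that the ceiling is only there to keep $N$ an integer and that morally the bound is $C^+/\sqrt H$, matching the lower bound up to the constant ratio $C^+/C^-\approx 8.25$.

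I expect the main obstacle to be the upper bound in \eqref{eq:P2}: transferring a discrete maximum-of-$N$-Gaussians estimate to the \emph{continuous} supremum $\sup_{t\in[0,1]}B_H(t)$ requires controlling the oscillation of $B_H$ between grid points, and the specific choice $N\asymp 2^{2/H}$ must be exactly calibrated so that this oscillation is absorbed into the constant $C^+$ rather than degrading the $\sqrt H$ rate. For $H$ close to $\tfrac12$ one must moreover verify tightness (the statement in the introduction claims the resulting $\mathscr U_2^\circ$ is tight at $H=\tfrac12$, so $C^+$ cannot be wasteful there). The lower bound in \eqref{eq:P2}, by contrast, is a clean one-shot Sudakov estimate, and both inequalities in \eqref{eq:P1} are elementary once Borell--TIS is invoked; so the real work is the careful discretisation argument underlying $\overline\mu(H,1)$.
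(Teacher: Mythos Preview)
Your treatment of \eqref{eq:P2} is more ambitious than necessary: the paper does not rederive these inequalities but simply cites Borovkov et al.\ --- \cite[Thm.~1(i)]{borovkov2017bounds} for the lower bound and \cite[Corollary~2]{borovkov2016new} for the upper bound. Your identification of the discretisation behind $\overline\mu(H,1)$ as ``the main obstacle'' is therefore misplaced as far as the present paper's proof goes; all the actual work here is in \eqref{eq:P1}.

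For the upper bound in \eqref{eq:P1} your sketch both differs in route from the paper and contains a gap. The gap: Borell--TIS sub-Gaussianity with variance proxy $1$ does \emph{not} give $\e|S-\e S|^p \leqslant \e|\mathscr N|^p$; the two-sided tail bound $\p(|S-\e S|>t)\leqslant 2e^{-t^2/2}$ integrated against $pt^{p-1}$ yields, already for $p=1$, only $\e|S-\e S|\leqslant\sqrt{2\pi}$, whereas $\e|\mathscr N|=\sqrt{2/\pi}$. Even after repairing this by restricting to $(S-\mu)_+$ and the one-sided bound, your pointwise route (bound $S^{\a-1}$ by $\max\{1,2^{\a-2}\}\bigl(\mu^{\a-1}+(S-\mu)_+^{\a-1}\bigr)$ on $\{S>\mu\}$ and then control $\e(S-\mu)_+$ and $\e(S-\mu)_+^{\a}$ separately) picks up an extra factor $\a$ in front of $\e|\mathscr N|^{\a-1}$, so you do not recover the stated constant. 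The paper instead works directly with the tail: write $\mu(H,\a)=\int_0^\infty\p(S>u^{1/\a})\,{\rm d}u$, bound the integrand by $1$ on $[0,\mu^\a]$ and by $\exp\bigl(-\tfrac12(u^{1/\a}-\mu)^2\bigr)$ on $[\mu^\a,\infty)$ via Borell--TIS, substitute $y=u^{1/\a}-\mu$ so that ${\rm d}u=\a(\mu+y)^{\a-1}\,{\rm d}y$, and only \emph{then} apply $(\mu+y)^{\a-1}\leqslant\max\{1,2^{\a-2}\}(\mu^{\a-1}+y^{\a-1})$ under the integral $\int_0^\infty(\cdot)\,e^{-y^2/2}\,{\rm d}y$. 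Keeping $(\mu+y)^{\a-1}$ coupled to the Gaussian density throughout is what produces the exact constant without a decoupling loss. The lower bound in \eqref{eq:P1} is, as you say, just Jensen.
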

\begin{proof}
The inequalities \eqref{eq:P2} are due to work by Borovkov et al.: for the lower bound, consult~\cite[Thm.\ 1(i)]{borovkov2017bounds} and for the upper bound consult \cite[Corollary~2]{borovkov2016new}.

Hence the inequalities \eqref{eq:P1} are left to be proven. The lower bound is an immediate  consequence of Jensen's inequality.
For the upper bound we rely on the {Borell-TIS inequality \cite[Thm.\ 2.1.1]{AdT09}}:
{locally abbreviating $\mu:=\mu(H,1)$,}
\begin{align*}
\mu(H,\a) & = \int_0^\infty \p\left(\sup_{t\in[0,1]} B_H(t) > u^{1/\a}\right){\rm d}u \leqslant \int_0^{\mu^{\a}}1\,{\rm d}u
+ \int_{\mu^{\a}}^\infty \exp\left(-\tfrac{1}{2}(u^{1/\a} - \mu)^2\right){\rm d}u \\
& = \int_0^{\mu^{\a}}1\,{\rm d}u + \kk{\a}\int_0^\infty (\mu+y)^{\a-1} \exp(-y^2/2)\,{\rm d}y \\
& \textcolor{black}{\leqslant \mu^\alpha + \max\{1,2^{\a-2}\}\,\a\int_{0}^\infty (\mu^{\a-1} + y^{\a-1}) \exp(-y^2/2)\,{\rm d}y} \\
& \textcolor{black}{= \mu^\alpha + \max\{1,2^{\a-2}\}\,\a\sqrt{\frac{\pi}{2}}\left(\mu^{\alpha-1} + \e|\mathscr N|^{\a-1}\right),}
\end{align*}
\textcolor{black}{where in the third line we used the inequality $(x+y)^p\leq \max\{1, 2^{p-1}\}\cdot(x^p + y^p)$, which holds for any $x,y,p>0$.}
\end{proof}

We further improve the upper bound in \eqref{eq:P2} with the following result. For $H^\circ\in(0,\tfrac{1}{2})$ define
\begin{align}\label{eq:defA}
A(H\,|\, H^\circ) := \frac{2(H-H^\circ)}{1-2H^\circ}.
\end{align}
\begin{lemma}\label{lem:sudakov_supXt}
For any $H\in[H^\circ,\frac{1}{2}]$,
\begin{align*}
\mu(H,1) \leqslant \sqrt{A(H\,|\, H^\circ)}\,\mu(\tfrac{1}{2},1) + \sqrt{1-A(H\,|\, H^\circ)} \,\mu(H^\circ,1),
\end{align*}
where $\mu(\frac{1}{2},1)=\sqrt{\pi/2}$.
\end{lemma}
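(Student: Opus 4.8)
The plan is to dominate the increments of $B_H$ on $[0,1]$ by those of an independent superposition of a Brownian motion and an fBm with Hurst index $H^\circ$, and then to combine the Sudakov--Fernique inequality with the subadditivity of the supremum. First I would record that $A:=A(H\,|\,H^\circ)$ lies in $[0,1]$ precisely because $H\in[H^\circ,\tfrac12]$: one has $A\geqslant 0$ iff $H\geqslant H^\circ$, and $A\leqslant 1$ iff $2(H-H^\circ)\leqslant 1-2H^\circ$, i.e.\ iff $H\leqslant\tfrac12$. A one-line computation then gives the identity $A\cdot 1+(1-A)\cdot 2H^\circ=2H$, so $2H$ is the convex combination of $1$ and $2H^\circ$ with weights $A$ and $1-A$. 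Next I would let $B_{1/2}(\cdot)$ be a standard Brownian motion and $B_{H^\circ}(\cdot)$ an independent fBm with Hurst parameter $H^\circ$, and set $Y(t):=\sqrt{A}\,B_{1/2}(t)+\sqrt{1-A}\,B_{H^\circ}(t)$, which is a centered Gaussian process with a continuous modification; by independence and stationarity of increments, $\e[(Y(t)-Y(s))^2]=A\,|t-s|+(1-A)\,|t-s|^{2H^\circ}$.

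The key step is the variogram comparison. For $s,t\in[0,1]$ we have $r:=|t-s|\in[0,1]$, so by the weighted AM--GM inequality applied with weights $A$ and $1-A$ to $r$ and $r^{2H^\circ}$, together with the identity above,
\[\e\big[(B_H(t)-B_H(s))^2\big]=r^{2H}=r^{A}\cdot\big(r^{2H^\circ}\big)^{1-A}\leqslant A\,r+(1-A)\,r^{2H^\circ}=\e\big[(Y(t)-Y(s))^2\big].\]
Since $B_H$ and $Y$ admit continuous (hence separable and a.s.\ bounded) modifications on the compact set $[0,1]$, the Sudakov--Fernique inequality (see e.g.\ \cite{AdT09}) applies and yields $\mu(H,1)=\e[\sup_{t\in[0,1]}B_H(t)]\leqslant\e[\sup_{t\in[0,1]}Y(t)]$. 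I would then finish by bounding the supremum of the sum by the sum of the suprema and taking expectations:
\[\e\Big[\sup_{t\in[0,1]}Y(t)\Big]\leqslant\sqrt{A}\,\e\Big[\sup_{t\in[0,1]}B_{1/2}(t)\Big]+\sqrt{1-A}\,\e\Big[\sup_{t\in[0,1]}B_{H^\circ}(t)\Big]=\sqrt{A}\,\mu(\tfrac12,1)+\sqrt{1-A}\,\mu(H^\circ,1),\]
with $\mu(\tfrac12,1)$ obtained from the reflection principle via $\sup_{t\in[0,1]}B_{1/2}(t)\eqd|\mathscr N|$.

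I do not expect a serious obstacle here: once the comparison process $Y$ is guessed --- and its weights $A,1-A$ are essentially forced by the definition of $A(H\,|\,H^\circ)$, which makes $2H$ a convex combination of $1$ and $2H^\circ$ --- the proof is elementary. The only places asking for (minor) care are the exponent bookkeeping in the weighted AM--GM step and checking that the regularity hypotheses of the Sudakov--Fernique inequality hold, both of which are routine for fBm.
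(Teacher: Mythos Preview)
Your proof is correct and follows essentially the same route as the paper: build the comparison process $Y(t)=\sqrt{A}\,B_{1/2}(t)+\sqrt{1-A}\,B_{H^\circ}(t)$, verify the variogram domination, apply the Sudakov--Fernique inequality, and finish with subadditivity of the supremum. The only difference is that you establish $r^{2H}\leqslant A\,r+(1-A)\,r^{2H^\circ}$ via weighted AM--GM (using $A+2H^\circ(1-A)=2H$), whereas the paper shows $\Var X(t)\geqslant\Var B_H(t)$ by checking that $t\mapsto A+(1-A)t^{2H^\circ-1}-t^{2H-1}$ attains its minimum at $t=1$; your argument is a bit cleaner but otherwise the proofs coincide.
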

We remark that $A(H\,|\,H^\circ)\to1$ as $H\uparrow\frac{1}{2}$, so this upper bound is tight at $H=\frac{1}{2}$.
\begin{proof}
Consider a new process $X(\cdot)$, defined by
\begin{equation}\label{eq:defXt}
X(t) := \sqrt{A(H\,|\,H^\circ)}\,B(t) + \sqrt{1-A(H\,|\,H^\circ)}\,B_{{H^\circ}}(t),
\end{equation}
with processes $B(\cdot)\equiv B_{\frac{1}{2}}(\cdot)$ and $B_{{H^\circ}}(\cdot)$ being independent. In the sequel we write for brevity  $A:=A(H\,|\,H^\circ)$. Then,
\begin{align*}
\Var\, X(t) - \Var \,B_H(t) = At + (1-A)t^{2{H^\circ}} - t^{2H} = t(A + (1-A)t^{2{H^\circ}-1}-t^{2H-1}).
\end{align*}
The function {$t\mapsto A + (1-A)t^{2{H^\circ}-1}-t^{2H-1}$} attains its global minimum at $t = 1$, which implies that $\Var \,X(t) \geqslant \Var\, B_H(t)$ for all $t\geqslant 0$.
This means that we are in a position to apply Sudakov's inequality,
see e.g.\ \cite[Thm.~2.6.5]{AdT09}  or  \cite[Prop.~1]{borovkov2017bounds}:
for all $s,t\in [0,1]$ we have
$\e\left[B_H(t)\right]=\e\left[X(t)\right]=0$ and (due to the stationarity of the increments of $B_H(\cdot)$ and $X(\cdot)$)
\begin{align*}
\e\left[ \left( B_H(t)-B_H(s)     \right)^2    \right]
&=
\e\left[ \left( B_H(|t-s|)     \right)^2    \right]\\
&\leqslant
\e\left[ \left( X(|t-s|)     \right)^2    \right]
=
\e\left[ \left( X(t)-X(s)     \right)^2    \right].
\end{align*}
This gives us
\begin{align*}
\mu(H,1)& = \e\left[\sup_{t\in[0,1]} B_H(t)\right]\leqslant \e\left[\sup_{t\in[0,1]} X(t)\right]\\&= \e\left[\sup_{t\in[0,1]}\big\{\sqrt{A}\,B(t) + \sqrt{1-A}\,B_{{H^\circ}}(t)\big\}\right]\\
&\leqslant \sqrt{A}\cdot\e\left[\sup_{t\in[0,1]}B(t)\right] + \sqrt{1-A}\cdot\e\left[\sup_{t\in[0,1]}B_{{H^\circ}}(t)\right],
\end{align*}
which completes the proof.
\end{proof}

Lemma \ref{lem:sudakov_supXt} can be used to improve the upper bound for $\mu(H,1)$ that was presented in Lemma~\ref{lem:a-moment_borel}. The following corollary provides this sharper upper bound.

\begin{corollary}\label{cor:sudakov_Hlow_supXt}
For any $H\in(0,\frac{1}{2}]$, $\mu(H, 1) \leqslant \overline\mu^\circ(H, 1) := \min\{\overline\mu(H,1), \overline\mu'(H,1)\}$, where
\begin{align*}
\overline\mu'(H, 1) := \inf_{H^\circ\in(0,H)}\bigg\{ \sqrt{A(H\,|\, H^\circ)\cdot \pi/2} + \sqrt{1-A(H\,|\, H^\circ)} \, \overline\mu(H,1)\bigg\},
\end{align*}
with $\overline\mu(H,1)$ defined in \eqref{eq:P2}.
\end{corollary}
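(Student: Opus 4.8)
The plan is to insert the Borovkov-type bound on $\mu(H^\circ,1)$ from \eqref{eq:P2} into the Sudakov interpolation inequality of Lemma~\ref{lem:sudakov_supXt}, optimise over the auxiliary parameter $H^\circ$, and intersect the result with the direct bound $\mu(H,1)\leqslant\overline\mu(H,1)$; the statement then follows essentially by bookkeeping.

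First I would fix $H\in(0,\tfrac{1}{2}]$ and take an arbitrary $H^\circ\in(0,H)$. Then $0<H^\circ<H\leqslant\tfrac{1}{2}$, so $H^\circ\in(0,\tfrac{1}{2})$ and $H\in[H^\circ,\tfrac{1}{2}]$, which means Lemma~\ref{lem:sudakov_supXt} and the upper bound in \eqref{eq:P2} both apply at $H^\circ$. A one-line computation shows that $A(H\,|\,H^\circ)=\tfrac{2(H-H^\circ)}{1-2H^\circ}\in(0,1]$ on this range (positivity is clear, and $A(H\,|\,H^\circ)\leqslant1$ is equivalent to $H\leqslant\tfrac{1}{2}$), so the weights $\sqrt{A(H\,|\,H^\circ)}$ and $\sqrt{1-A(H\,|\,H^\circ)}$ are real. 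Hence Lemma~\ref{lem:sudakov_supXt}, combined with $\mu(\tfrac{1}{2},1)=\sqrt{\pi/2}$ and the bound $\mu(H^\circ,1)\leqslant\overline\mu(H^\circ,1)$ of \eqref{eq:P2}, gives
\[
\mu(H,1)\;\leqslant\;\sqrt{A(H\,|\,H^\circ)}\,\sqrt{\pi/2}\;+\;\sqrt{1-A(H\,|\,H^\circ)}\;\overline\mu(H^\circ,1).
\]

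Since this holds for every $H^\circ\in(0,H)$ and the right-hand side is nonnegative (so the set of its values is bounded below), taking the infimum over $H^\circ\in(0,H)$ yields $\mu(H,1)\leqslant\overline\mu'(H,1)$. Intersecting this with the direct bound $\mu(H,1)\leqslant\overline\mu(H,1)$, again from \eqref{eq:P2}, gives $\mu(H,1)\leqslant\min\{\overline\mu(H,1),\overline\mu'(H,1)\}=\overline\mu^\circ(H,1)$, which is the claim. I would also record the consistency at the right endpoint: for $H=\tfrac{1}{2}$ one has $A(\tfrac{1}{2}\,|\,H^\circ)=1$ for every $H^\circ$, so $\overline\mu'(\tfrac{1}{2},1)=\sqrt{\pi/2}=\mu(\tfrac{1}{2},1)$ and the bound is tight there, as already remarked after Lemma~\ref{lem:sudakov_supXt}.

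There is no genuine obstacle here: the only nontrivial point is bookkeeping, namely checking that the hypotheses of Lemma~\ref{lem:sudakov_supXt} --- equivalently, $A(H\,|\,H^\circ)\in[0,1]$, which is exactly what makes the variance comparison $\Var X(t)\geqslant\Var B_H(t)$ in that lemma valid --- hold uniformly over the entire range $H^\circ\in(0,H)$ over which the infimum is taken, and that $\overline\mu(H^\circ,1)$ from \eqref{eq:P2} is legitimately available as an upper bound for $\mu(H^\circ,1)$ on that range; both hold automatically because $0<H^\circ<H\leqslant\tfrac{1}{2}$.
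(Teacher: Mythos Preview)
Your argument is correct and is exactly the route the paper intends: the corollary is stated without proof precisely because it amounts to inserting the Borovkov bound \eqref{eq:P2} for $\mu(H^\circ,1)$ into Lemma~\ref{lem:sudakov_supXt}, optimising over $H^\circ$, and taking the minimum with the direct bound $\overline\mu(H,1)$. One small point worth flagging: the displayed definition of $\overline\mu'(H,1)$ in the corollary has $\overline\mu(H,1)$ where your derivation (correctly) produces $\overline\mu(H^\circ,1)$; this is almost certainly a typographical slip in the paper rather than a gap in your proof, since only the version with $\overline\mu(H^\circ,1)$ actually follows from Lemma~\ref{lem:sudakov_supXt} and \eqref{eq:P2}.
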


Notably,  in the neighborhood for $H=\frac{1}{2}$ the upper bound in Corollary \ref{cor:sudakov_Hlow_supXt} improves the upper bound that was found in \cite{borovkov2016new}; see Figure \ref{fig:borovkovVSsudakov}.
More precisely, the figure shows that the above upper bound improves the previous bound $\overline\mu(H,1)$
for $H\in[0.412, 0.5]$. The discontinuities are due to the upper bound in \eqref{eq:P2} being piece-wise constant.

\begin{figure}[h]
        \centering
       \includegraphics[height=0.45\linewidth,width=0.675\linewidth]{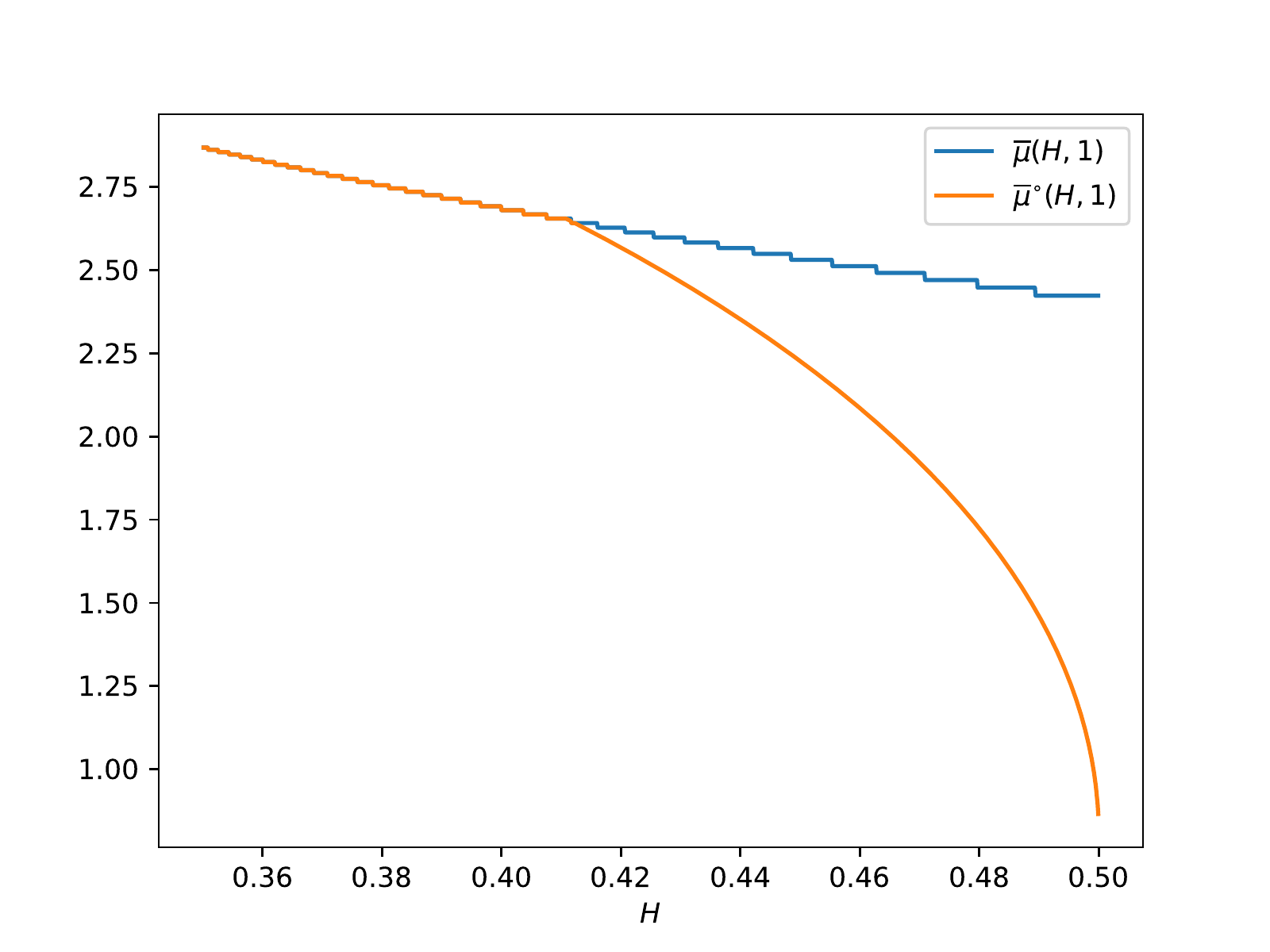}
      \caption{Comparison between the upper bounds for $\mu(H,1)$ given by Lemma \ref{lem:a-moment_borel} and Corollary \ref{cor:sudakov_Hlow_supXt}.
      }\label{fig:borovkovVSsudakov}
\end{figure}

Relying on similar ideas, we can improve the upper bound $\mathscr U_2(\cdot)$ found in Proposition \ref{thm:UB_Cinv}.
Recall that $\mathscr U_2(\cdot)$ has the undesirable property that it has a jump at $H=\frac{1}{2}$, where the exact value of $\mathscr M(H)$ is known ($\mathscr M(\tfrac{1}{2}) = \frac{1}{2}$).
Define, for ${H^\circ}\in(0,\frac{1}{2})$,
\[\gamma(H\,|\,H^\circ) :=\left(\frac{1-2H}{1-2{H^\circ}}\right)^{\frac{1-2{H^\circ}}{2(1-{H^\circ})}}.\]
We remark that $\gamma(H\,|\,H^\circ)\to0$ as  {$H\uparrow\frac{1}{2}$}, so the upper bound in the following lemma is tight at $H=\frac{1}{2}$.

\begin{lemma}\label{lem:sudakov_Hlow}
For any $H\in[{H^\circ},\frac{1}{2}]$,
\begin{align*}
{\mathscr M}(H) \leqslant {\mathscr M}({\tfrac{1}{2}}) +\gamma(H\,|\,H^\circ) {\mathscr M}({{H^\circ}}).
\end{align*}
\end{lemma}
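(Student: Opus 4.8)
The plan is to mimic the proof of Lemma \ref{lem:sudakov_supXt}, but now applied to the drifted all-time supremum rather than the driftless supremum over $[0,1]$. First I would introduce the auxiliary process $X(\cdot)$ exactly as in \eqref{eq:defXt}, namely $X(t):=\sqrt{A}\,B(t)+\sqrt{1-A}\,B_{H^\circ}(t)$ with $A:=A(H\,|\,H^\circ)$ and $B(\cdot),B_{H^\circ}(\cdot)$ independent, and recall from the proof of Lemma \ref{lem:sudakov_supXt} that $\var X(t)\ge \var B_H(t)$ for all $t\ge 0$, with equality at $t=1$. Since both $B_H(\cdot)$ and $X(\cdot)$ have stationary increments and mean zero, Sudakov's inequality (in the variogram-ordering form used in the proof of Lemma \ref{lem:sudakov_supXt}) gives $\e\,\sup_{t\in S}B_H(t)\le \e\,\sup_{t\in S}X(t)$ for any $S$, and more generally for the drifted processes one should exploit the representation $\mathscr M(H)=\e\big[(\sup_{t\ge 0}B_H(t)/(1+t))^{1/(1-H)}\big]$ from the proof of Proposition \ref{thm:LB_CJB}. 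However, the cleanest route is to work with the drift directly: I would apply a comparison inequality for the random variable $\sup_{t\ge0}\{B_H(t)-t\}$ against $\sup_{t\ge0}\{X(t)-t\}$.

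The key step is then to bound $\e\,\sup_{t\ge0}\{X(t)-t\}$ by splitting the drift. Write $t = \gamma^2 t + (1-\gamma^2) t$ for a parameter $\gamma=\gamma(H\,|\,H^\circ)\in(0,1)$ to be chosen, and use subadditivity of the supremum:
\begin{align*}
\sup_{t\ge0}\{X(t)-t\} &= \sup_{t\ge0}\big\{\sqrt{A}\,B(t)+\sqrt{1-A}\,B_{H^\circ}(t) - \gamma^2 t - (1-\gamma^2)t\big\}\\
&\le \sup_{t\ge0}\big\{\sqrt{A}\,B(t)-\gamma^2 t\big\} + \sup_{t\ge0}\big\{\sqrt{1-A}\,B_{H^\circ}(t)-(1-\gamma^2)t\big\}.
\end{align*}
Taking expectations and then using the self-similarity scaling relation $\mathscr M(H,c)=c^{2H\gamma_H}\mathscr M(H)$ established in \eqref{EXP} (with $\gamma_H=(2H-2)^{-1}$), each term becomes a scalar multiple of $\mathscr M(\tfrac12)$, resp.\ $\mathscr M(H^\circ)$: the first term equals $(\sqrt{A})^{?}\,(\gamma^2)^{?}\,\mathscr M(\tfrac12)$ and similarly for the second. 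I would then optimize over $\gamma$: the natural choice is to make the $\mathscr M(\tfrac12)$-coefficient equal to exactly $1$, which forces $\gamma$ to take a specific value, and a short computation should identify that value with $\gamma(H\,|\,H^\circ)$ as defined just before the lemma, and simultaneously reduce the $\mathscr M(H^\circ)$-coefficient to $\gamma(H\,|\,H^\circ)$. (One checks that with this choice of $\gamma$ the coefficient of $\mathscr M(\tfrac12)$ is $1$ using $A(\tfrac12\,|\,H^\circ)$-type algebra; the appearance of the exponent $\tfrac{1-2H^\circ}{2(1-H^\circ)}$ in $\gamma(H\,|\,H^\circ)$ is exactly what comes out of combining the self-similarity exponent $2H^\circ\gamma_{H^\circ}=\tfrac{H^\circ}{H^\circ-1}$ with the factor $1-A$.)

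The main obstacle, I expect, is getting the algebra of the two scaling exponents to collapse cleanly onto the stated closed form for $\gamma(H\,|\,H^\circ)$ and onto a coefficient of exactly $1$ for $\mathscr M(\tfrac12)$ — in particular, verifying that the free parameter $\gamma$ can be chosen so that \emph{both} the Brownian-drift term contributes coefficient $1$ \emph{and} no slack is lost, so that the bound is genuinely tight at $H=\tfrac12$ (where $A\to1$, $\gamma(H\,|\,H^\circ)\to0$). A secondary, more technical point is justifying the comparison $\e\,\sup_{t\ge0}\{B_H(t)-t\}\le\e\,\sup_{t\ge0}\{X(t)-t\}$ rigorously via Sudakov: one needs the variogram ordering $\e(B_H(t)-B_H(s))^2\le \e(X(t)-X(s))^2$ for all $s,t\ge0$ (which follows from $\var X\ge\var B_H$ and stationarity of increments exactly as in Lemma \ref{lem:sudakov_supXt}), together with an approximation argument to pass from suprema over compact intervals to the all-time supremum with drift — since the drifted supremum is a.s.\ finite and attained, a monotone-convergence/continuity argument over $[0,T]$ as $T\to\infty$ suffices, and the comparison of finite-interval suprema is the standard Sudakov statement.
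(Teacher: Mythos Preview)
Your approach is essentially identical to the paper's: introduce $X(t)=\sqrt{A}\,B(t)+\sqrt{1-A}\,B_{H^\circ}(t)$, apply the Sudakov-type comparison to pass from $\e\sup_{t\ge 0}\{B_H(t)-t\}$ to $\e\sup_{t\ge 0}\{X(t)-t\}$, split the drift $t=ct+(1-c)t$, bound the supremum of the sum by the sum of suprema, and invoke self-similarity on each piece. The paper simply takes $c=A$ (your $\gamma^2=A$) without calling it an optimization; with that choice the Brownian term collapses to $\mathscr M(\tfrac12)$ and the $B_{H^\circ}$ term becomes $\gamma(H\,|\,H^\circ)\,\mathscr M(H^\circ)$ after the scaling computation.

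One small correction to your plan: the value of the splitting parameter that makes the $\mathscr M(\tfrac12)$ coefficient equal to $1$ is $\gamma^2=A$, i.e.\ $\gamma=\sqrt{A}$, \emph{not} $\gamma=\gamma(H\,|\,H^\circ)$. The quantity $\gamma(H\,|\,H^\circ)$ emerges only as the coefficient of $\mathscr M(H^\circ)$ once you push $\sqrt{1-A}\,B_{H^\circ}(t)-(1-A)t$ through self-similarity; concretely, $1-A=(1-2H)/(1-2H^\circ)$ and the scaling exponent $-H^\circ/(1-H^\circ)$ combine to give $(1-A)^{(1-2H^\circ)/(2(1-H^\circ))}=\gamma(H\,|\,H^\circ)$. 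So the two objects you conflated are different, but the structure of your argument is correct and matches the paper. Your extra care about justifying Sudakov on the unbounded index set via monotone convergence over $[0,T]$ is a point the paper leaves implicit.
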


\begin{proof}
Analogously to the proof of Lemma \ref{lem:sudakov_supXt}, the application of Sudakov's inequality, with $A$ defined as in \eqref{eq:defA} and process $X(t)$ defined in \eqref{eq:defXt}, for any fixed $c\in(0,1)$, yields
\begin{align*}
{\mathscr M}(H)& = \e\left[\sup_{t\geqslant 0} \{B_H(t)-t\} \right]\leqslant \e\left[\sup_{t\geqslant 0} \{X(t)-t \}\right]\\&= \e\left[\sup_{t\geqslant 0} \{\sqrt{A}\,B(t) + \sqrt{1-A}\,B_{{H^\circ}}(t) - t \}\right]\\
& = \e\left[\sup_{t\geqslant 0} \left\{\Big(\sqrt{A}\,B(t)-ct\Big) + \Big(\sqrt{1-A}\,B_{{H^\circ}} (t)- (1-c)t\Big)\right\}\right] \\
& \leqslant \e\left[\sup_{t\geqslant 0}\Big\{\sqrt{A}\,B(t)-ct\Big\}\right] + \e\left[\sup_{t\geqslant 0} \Big\{\sqrt{1-A}\,B_{{H^\circ}}(t) - (1-c)t\Big\}\right].
\end{align*}
Finally, an application of the self-smilarity property with $c=A$ yields, after some straightforward computations, the desired upper bound.
\end{proof}

We remark that Lemma \ref{lem:sudakov_Hlow} can be further improved by optimizing over all constants $A$ (or, equivalently, $H^\circ$) and $c$ in the proof; the resulting optimized bound is not explicit (but can be obtained numerically using standard software).

In the following corollary, the upper bound of Lemma \ref{lem:sudakov_Hlow} is combined with Proposition \ref{thm:UB_Cinv}, and in addition optimized over $H^\circ\in(0,H]$.

\begin{corollary}\label{cor:sudakov_Hlow}
For any $H\in(0,\frac{1}{2}]$, ${\mathscr M}(H) \leqslant \mathscr U_2^\circ(H) =  \min\{\mathscr U_2(H), \mathscr U'_2(H)\}$, where
\begin{align*}
\mathscr U_2'(H) := \tfrac{1}{2} + \inf_{H^\circ\in(0,H)} \gamma(H\,|\,H^\circ)\,\mathscr U_2(H^\circ)
\end{align*}
and $\mathscr U_2(\cdot)$ is defined in Proposition $\ref{thm:UB_Cinv}$.
\end{corollary}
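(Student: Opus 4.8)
The plan is to combine the two upper bounds that have already been established for ${\mathscr M}(H)$ on the subdiffusive range, namely the (semi-)closed-form bound ${\mathscr U}_2(H)$ from Proposition~\ref{thm:UB_Cinv} and the Sudakov-type recursive bound from Lemma~\ref{lem:sudakov_Hlow}, and then to optimize the latter over the free parameter $H^\circ$. Since both are valid upper bounds on ${\mathscr M}(H)$, their minimum is again a valid upper bound, so the only content is to verify that the second term in the minimum can be taken in the stated form $\mathscr U_2'(H)$.

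First I would recall from Lemma~\ref{lem:sudakov_Hlow} that for every $H^\circ\in(0,\tfrac12)$ and every $H\in[H^\circ,\tfrac12]$ one has ${\mathscr M}(H)\leqslant {\mathscr M}(\tfrac12)+\gamma(H\,|\,H^\circ)\,{\mathscr M}(H^\circ)$. Next I would substitute the known value ${\mathscr M}(\tfrac12)=\tfrac12$ (stated in the introduction, from the exponential distribution of $\sup_{t\geqslant0}\{B_{1/2}(t)-t\}$ with parameter~$2$) into the first term. For the second term, since $H^\circ<H\leqslant\tfrac12$, Proposition~\ref{thm:UB_Cinv} applies at the point $H^\circ$ and gives ${\mathscr M}(H^\circ)\leqslant {\mathscr U}_2(H^\circ)$; because $\gamma(H\,|\,H^\circ)>0$ this yields
\[
{\mathscr M}(H)\leqslant \tfrac12+\gamma(H\,|\,H^\circ)\,{\mathscr U}_2(H^\circ).
\]
This inequality holds for every admissible $H^\circ$, so it also holds with the right-hand side replaced by its infimum over $H^\circ\in(0,H)$; one should note that the constraint $H^\circ\in[?,H]$ from Lemma~\ref{lem:sudakov_Hlow} is $H^\circ\leqslant H$, and taking the open interval $(0,H)$ only shrinks the feasible set, which is harmless since we are taking an infimum (any value at $H^\circ$ arbitrarily close to $H$ is attainable, and $\gamma(H\,|\,H)=0$ so nothing is lost at the endpoint). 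Hence ${\mathscr M}(H)\leqslant \tfrac12+\inf_{H^\circ\in(0,H)}\gamma(H\,|\,H^\circ)\,{\mathscr U}_2(H^\circ)={\mathscr U}_2'(H)$. Combining this with the already-proven ${\mathscr M}(H)\leqslant {\mathscr U}_2(H)$ gives ${\mathscr M}(H)\leqslant\min\{{\mathscr U}_2(H),{\mathscr U}_2'(H)\}=\mathscr U_2^\circ(H)$, which is exactly the claim.

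There is essentially no obstacle here: the corollary is a bookkeeping combination of two prior results, and the only points requiring a line of justification are (i)~that $\mathscr U_2(\cdot)$ is finite and well-defined on $(0,H)$ so the infimum makes sense — which follows since $\omega(H^\circ)\leqslant\omega_2(H^\circ)=1/\nu(H^\circ)<\infty$ and $\overline\mu(H^\circ)<\infty$ by \eqref{eq:P1P2} — and (ii)~that the positivity of $\gamma(H\,|\,H^\circ)$ lets us pass from the bound on ${\mathscr M}(H^\circ)$ to the bound on $\gamma(H\,|\,H^\circ){\mathscr M}(H^\circ)$ without reversing the inequality. The mild care needed is simply matching the domain $H^\circ\in(0,H)$ used in the statement with the domain $H^\circ\in(0,\tfrac12)$, $H^\circ\leqslant H$ used in Lemma~\ref{lem:sudakov_Hlow}; since $H\leqslant\tfrac12$ both constraints reduce to $0<H^\circ\leqslant H$, and replacing the closed endpoint by an open one does not change the infimum because $\gamma(H\,|\,H^\circ)\to0$ as $H^\circ\uparrow H$.
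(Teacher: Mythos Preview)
Your argument is correct and matches the paper's approach exactly: the corollary is stated there without proof, as the immediate combination of Lemma~\ref{lem:sudakov_Hlow} (with ${\mathscr M}(\tfrac12)=\tfrac12$) and Proposition~\ref{thm:UB_Cinv}, followed by optimization over $H^\circ$. One small slip in your parenthetical remarks: $\gamma(H\,|\,H^\circ)\to 1$, not $0$, as $H^\circ\uparrow H$ (the base $(1-2H)/(1-2H^\circ)\to 1$); this does not affect the proof, since taking the infimum over the open interval $(0,H)$ still yields a valid upper bound regardless of what happens at the endpoint.
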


\section{Proofs of Theorems \ref{thm:infsup}--\ref{thm:slon}}\label{thm:proofs}
In this section we use the results from the previous sections to establish Theorems \ref{thm:infsup}--\ref{thm:slon}, using the bounds developed in the previous sections.

\begin{proof}[Proof of Theorem \ref{thm:infsup}]
The result concerning $H\uparrow 1$, in \eqref{eq:infsup1}, is an immediate consequence of the results presented in Propositions~\ref{thm:AMI} and \ref{thm:UB_K2001}, in combination with noticing that \[\lim_{H\uparrow 1} H^{\frac{H}{1-H}} = \frac{1}{e}.\]

We continue by showing the result concerning $H\downarrow 0$, i.e., \eqref{eq:infsup0}.
From  {the proofs of} Propositions~\ref{thm:LB_CJB} and  \ref{thm:UB_Cinv}, $\mu(H)$ and $\omega_2(\cdot)$ defined as in Proposition~\ref{thm:UB_Cinv}, we have
\begin{align*}
(1-H)H^{\frac{H}{1-H}} \leqslant \frac{{\mathscr M}(H)}{\mu(H)} \leqslant \omega_2(H) = H^{-H}(1-H)^{-(1-H)},
\end{align*}
which implies that $\lim_{H\downarrow 0} {\mathscr M}(H)/\mu(H) = 1$. From the first part of Lemma \ref{lem:a-moment_borel}, locally using the short notation $\mu:=\mu(H,1)$, and with $\a=({1-H})^{-1}$ \textcolor{black}{and $H<1/2$,}
\begin{align*}
\mu^{\a-1} \leqslant \frac{\mu(H)}{\mu} \leqslant \mu^{\a-1} +
\frac{\textcolor{black}{\a\sqrt{\frac{\pi}{2}}\left(\mu^{\alpha-1} + \e|\mathscr N|^{\a-1}\right)}}{\mu}
\end{align*}
Now, due to the second part of Lemma \ref{lem:a-moment_borel}
we know that $C^- H^{-1/2}\leqslant\mu\leqslant \bar C^+ H^{-1/2}$,
were $C^-=0.2$, and $\bar C^+$ is some constant larger than the $C^+=1.695$. This shows that
\[\textcolor{black}{\frac{\textcolor{black}{\mu^{\alpha-1} + \e|\mathscr N|^{\a-1}}}{\mu}=\frac{1}{\mu^{2-\alpha}} + \frac{\e|\mathscr N|^{\a-1}}{\mu}}
\]
tends to $0$, as $H\downarrow 0$. What is more,
\begin{align*}
(C^-)^{\frac{H}{1-H}} \cdot H^{-\frac{H}{2(1-H)}} \leqslant \mu^{\a-1} \leqslant (C^+)^{\frac{H}{1-H}} \cdot H^{-\frac{H}{2(1-H)}},
\end{align*}
which shows that $\mu^{\a-1}\to1$ as $H\downarrow0$. We thus conclude that $\lim_{H\downarrow 0} {\mathscr M}(H)/\mu = 1$, and hence~\eqref{eq:infsup0} holds.
\end{proof}

\begin{proof}[Proof of Theorem \ref{thm:slon}]
The first part follows directly from the numerical computations underlying Figure \ref{fig:slon}.
The second part follows from Propositions~\ref{thm:AMI} and \ref{thm:UB_K2001} by noticing that, on the interval $H\in[\frac{1}{2},1)$, $H\mapsto H^{{H}/({H-1})}$ monotonically increases from $2$ to $e.$
\end{proof}

\begin{figure}
        \centering
       \includegraphics[height=0.45\linewidth,width=0.675\linewidth]{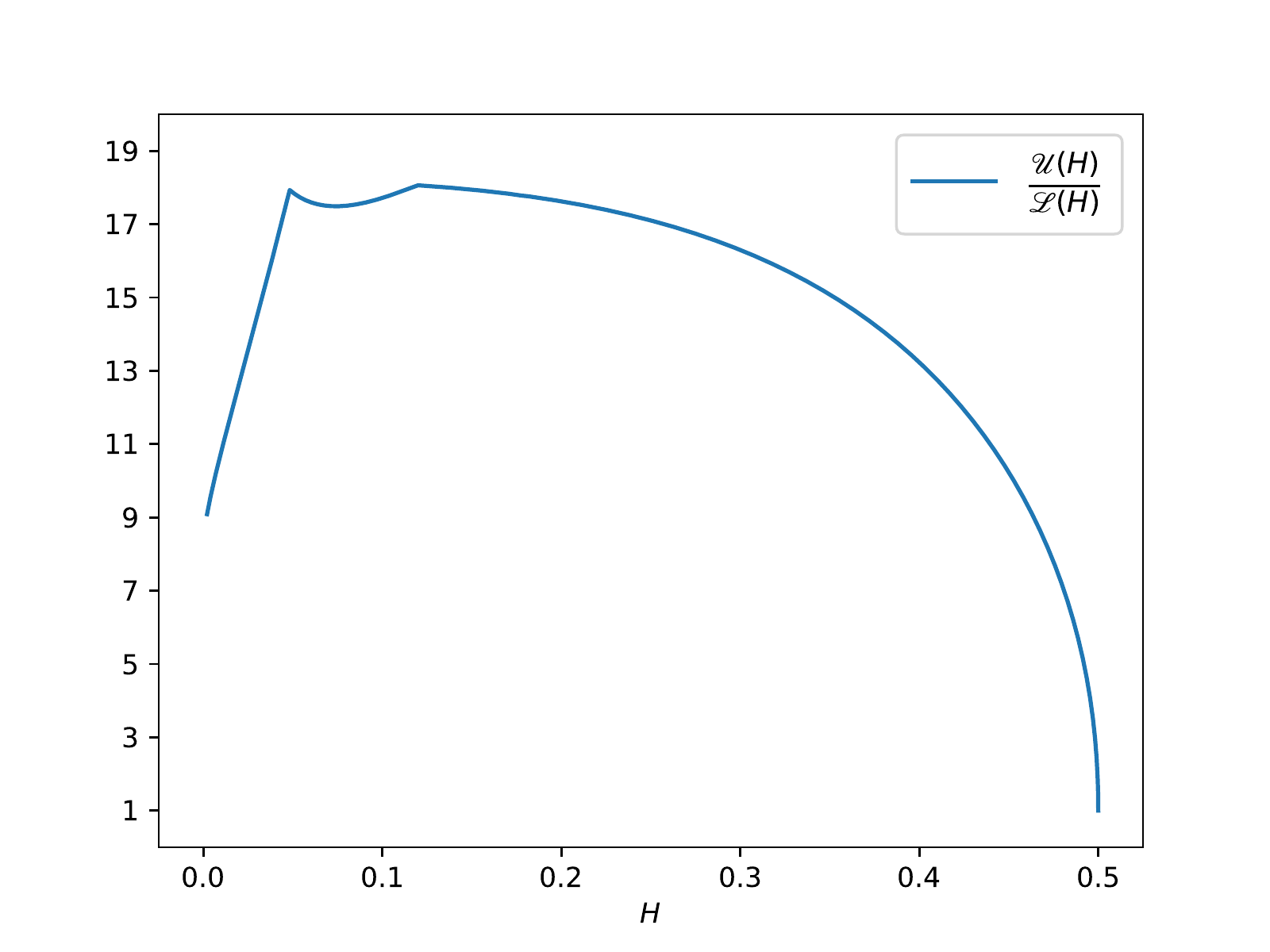}
      \caption{The ratio between the upper and lower bounds for $\mathscr M(H)$ for $H\in[0,\tfrac{1}{2}]$.
      }\label{fig:slon}
\end{figure}

\section{Discussion and conclusions}
\label{CONC} In this paper we have developed upper and lower bounds on the expected supremum of fBm with drift.
Some of these bounds are in closed form, whereas we also include numerical procedures to improve on such bounds.
Future work could aim at further shrinking the gap between the upper and lower bounds.
In addition, one could pursue developing similar bounds for higher moments of the supremum,
or alternatively the variance. Regarding the variance various complications are foreseen,
most notably the  magnitude of the best lower bound on the second moment potentially exceeding the square
of the best upper bound on the first moment, rendering the resulting lower bound on the variance  useless.
Another branch of research could concentrate on finding bounds on the expected supremum for non-fBm Gaussian processes.

\bibliographystyle{plain}
{\small }

\end{document}